\newtheorem{theorem}{Theorem}[section]
\newtheorem{corollary}[theorem]{Corollary}
\newtheorem{lemma}[theorem]{Lemma}
\newtheorem{proposition}[theorem]{Proposition}
\newtheorem{definition}{Definition}
\newtheorem*{remark}{Remark}
\numberwithin{equation}{section}
\newcommand{\RePt}{\mathrm{Re}\,}
\newcommand{\ImPt}{\mathrm{Im}\,}
\newcommand{\ball}{\mathbb{B}}
\newcommand{\sphere}{\mathbb{S}}
\newcommand{\bfL}{\mathcal{L}}
\newcommand{\calU}{\mathcal{U}}
\newcommand{\bfi}{\mathbf{i}}
\newcommand{\bfbeta}{\boldsymbol{\beta}}
\newcommand{\bfrho}{\boldsymbol{\rho}}
\newcommand{\calB}{\mathcal{B}}
\newcommand{\bbC}{\mathbb{C}}
\newcommand{\bbR}{\mathbb{R}}
\newcommand{\bbN}{\mathbb{N}}
\begin{document}

\title[A uniqueness property for Bergman functions]
{A uniqueness property for Bergman functions \\
on the Siegel upper half-space}

\author{Congwen Liu}
\email{cwliu@ustc.edu.cn}
\address{CAS Wu Wen-Tsun Key Laboratory of Mathematics,
School of Mathematical Sciences,
University of Science and Technology of China\\
Hefei, Anhui 230026,
People's Republic of China.}
\thanks{
The first author was supported by the National Natural Science Foundation of China grant 11971453.}

\author{Jiajia Si}
\email{sijiajia@mail.ustc.edu.cn}
\address{School of Science, Hainan University,
Haikou, Hainan 570228,
People's Republic of China.}
\thanks{The second author was supported by the Hainan Provincial Natural Science Foundation of China grant 120QN177.}

\author{Heng Xu}
\email{xuheng86@mail.ustc.edu.cn}
\address{School of Mathematical Sciences,
University of Science and Technology of China\\
Hefei, Anhui 230026,
People's Republic of China.}

\subjclass[2010]{Primary 32A36; Secondary 32A25, 32W50.}

\begin{abstract}
In this paper, we show that the Bergman functions on the Siegel upper half-space enjoy
the following uniqueness property:  if $f\in A_t^p(\calU)$ and $\bfL^{\alpha} f\equiv 0$
for some nonnegative multi-index $\alpha$, then $f\equiv 0$, where $\bfL^{\alpha}:=(\bfL_1)^{\alpha_1}
\cdots (\bfL_n)^{\alpha_n}$ with $\bfL_j =  \frac{\partial }{\partial z_j}  + 2i \bar{z}_j \frac{\partial }{\partial z_n}$ for $j=1,\ldots, n-1$ and $\bfL_n = \frac{\partial }{\partial z_n}$. As a consequence, we obtain a new
integral representation for the Bergman functions on the Siegel upper half-space.
In the end, as an application, we derive a result that relates the Bergman norm to a ``derivative norm'',
which suggests an alternative definition of the Bloch space and a notion of the Besov spaces
over the Siegel upper half-space.
\end{abstract}

\keywords{Siegel upper half-space; Bergman functions; uniqueness property; integral representation; derivative norm.}

\maketitle

\section{introduction}

Let $\mathbb{C}^n$ be the $n$-dimensional complex Euclidean space.
The Siegel upper half-space is the set
\[
\calU=\left\{ z\in\mathbb{C}^n:\ImPt z_n>|z^{\prime}|^2\right\}.
\]
Here and throughout the paper, we use the notation
\[
z=(z^{\prime},z_n),\,\,\,\, \text{where}\, z^{\prime}=(z_1,\cdots,z_{n-1})\in\mathbb{C}^{n-1}\,\, \text{and}\,\, z_n\in\mathbb{C}^{1}.
\]

For $1\leq p<\infty$ and $t>-1$, the space $L_t^p(\calU)$ consists of all Lebesgue measurable
functions $f$ on $\calU$ for which
\begin{equation*}
\|f\|_{p,t}=\bigg\{\int\limits_{\calU} |f(z)|^p \bfrho(z)^t dV(z)\bigg\}^{1/p}<\infty,
\end{equation*}
where $dV$ denotes the Lebesgue measure on $\mathbb{C}^n$.
The weighted Bergman space $A_t^p(\calU)$ is the closed subspace of $L_t^p(\calU)$ consisting of
holomorphic functions.
As usual, while $t=0$ it is written as $A^p(\calU)$.
In particular, the Bergman space $A^2(\calU)$ is a Hilbert space
endowed with the usual $L^2$ inner product. There is a self-adjoint projection of $L^2(\calU)$ onto $A^2(\calU)$, which can be expressed as an integral operator:
\[
Pf(z)=\frac{n!}{4\pi^n}\int\limits_{\calU} \frac{f(w)}{\bfrho(z,w)^{n+1}}dV(w),
\]
where
\[
\bfrho(z,w)= \frac{i}{2}(\overline{w}_n-z_n)-z^{\prime} \cdot \overline{w^{\prime}} .
\]
See \cite[Theorem 5.1]{Gin64} for instance.


Note that $\bfrho(z):=\bfrho(z,z)$ is just the defining function for $\calU$.
For $n \geq 2$, a simple calculation shows
\begin{align*}
\bfL_j :=& \frac {\partial }{\partial z_j}  + 2i \bar{z}_j \frac {\partial }{\partial z_n}, \quad j=1,\ldots,n-1
\end{align*}
forms a global basis for the space of tangential $(1, 0)$ vector fields on the boundary $b\calU$.
Also, we write $\bfL_n := \frac {\partial }{\partial z_n}$.
As usual, for $\alpha\in \mathbb{N}_0^n$ we write $\bfL^{\alpha}:=(\bfL_1)^{\alpha_1} \cdots (\bfL_n)^{\alpha_n}$.
Here and throughout the paper, $\mathbb{N}_0$ denotes the set of nonnegative integers.

Our first main result is the following, we refer to it as the uniqueness property for Bergman functions on $\calU$.

\begin{theorem}\label{thm:main2}
Suppose that $1\leq p<\infty$, $t>-1$ and $f\in A_t^p(\calU)$. If $\bfL^{\alpha} f\equiv 0$ for some multi-index $\alpha\in \mathbb{N}_0^n$, then $f\equiv 0$.
\end{theorem}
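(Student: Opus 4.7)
My plan is to convert the hypothesis $\bfL^\alpha f\equiv 0$ into the purely holomorphic identity $\partial_{z_n}^{|\alpha|}f\equiv 0$, and then show that no nonzero holomorphic function satisfying the latter can lie in $A_t^p(\calU)$.

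First I would check that $\bfL_1,\ldots,\bfL_n$ pairwise commute on smooth functions; this is a direct Wirtinger computation using $\partial_{z_j}\bar z_k=\partial_{z_n}\bar z_k=0$. Since multiplication by $2i\bar z_j$ also commutes with both $\partial_{z_j}$ and $\partial_{z_n}$, the binomial theorem applies exactly to $\bfL_j^{\alpha_j}=(\partial_{z_j}+2i\bar z_j\partial_{z_n})^{\alpha_j}$. Setting $\alpha':=(\alpha_1,\ldots,\alpha_{n-1})$ and collecting terms, one obtains on any holomorphic $f$
\begin{equation*}
\bfL^\alpha f \;=\; \sum_{\beta\le\alpha'}\binom{\alpha'}{\beta}\,(2i\bar z')^{\beta}\,\partial_{z'}^{\alpha'-\beta}\partial_{z_n}^{\alpha_n+|\beta|}f,
\end{equation*}
where the coefficients $\partial_{z'}^{\alpha'-\beta}\partial_{z_n}^{\alpha_n+|\beta|}f$ are holomorphic on $\calU$ and the monomials $\bar z'^{\beta}$ are antiholomorphic. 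Applying $\partial_{\bar z'}^{\alpha'}:=\prod_{j<n}\partial_{\bar z_j}^{\alpha_j}$ to the identity $\bfL^\alpha f\equiv 0$, and invoking the Leibniz rule together with the vanishing of $\partial_{\bar z'}$ on holomorphic functions, annihilates every term except the one with $\beta=\alpha'$, producing $\alpha'!(2i)^{|\alpha'|}\partial_{z_n}^{|\alpha|}f\equiv 0$ and hence $\partial_{z_n}^{|\alpha|}f\equiv 0$ on $\calU$.

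Therefore, for each fixed $z'$, the slice $z_n\mapsto f(z',z_n)$ is a polynomial of degree at most $|\alpha|-1$ on the half-plane $\{z_n:\ImPt z_n>|z'|^2\}$. By Fubini the finiteness of $\|f\|_{p,t}$ gives, for almost every $z'$,
$$\int_{\{\ImPt z_n>|z'|^2\}}|f(z',z_n)|^p\,(\ImPt z_n-|z'|^2)^t\,dA(z_n)<\infty.$$
The translation $\zeta=z_n-i|z'|^2$ converts this to $\int_{\bbH}|q(\zeta)|^p(\ImPt\zeta)^t\,dA(\zeta)<\infty$, where $q$ is a polynomial in $\zeta$ of degree at most $|\alpha|-1$ (depending on $z'$). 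Slicing at fixed height $y=\ImPt\zeta>0$, the inner integral $\int_{\bbR}|q(x+iy)|^p\,dx$ is the $L^p(\bbR)$ norm of a polynomial in $x$ whose leading coefficient equals the leading coefficient of $q$ (hence is independent of $y$); this is finite only when $q\equiv 0$. So $f(z',\cdot)\equiv 0$ for almost every $z'$, and continuity of $f$ on the connected open set $\calU$ then forces $f\equiv 0$.

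The only substantive step is the algebraic reduction in the first paragraph: once one recognizes that $2i\bar z_j$ behaves as a constant relative to $\partial_{z_j}$ and $\partial_{z_n}$, the binomial expansion is exact, and a single application of $\partial_{\bar z'}^{\alpha'}$ cleanly isolates the top holomorphic coefficient $\partial_{z_n}^{|\alpha|}f$. The remaining integrability step is then a short Fubini/one-variable calculation.
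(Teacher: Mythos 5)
Your argument is correct, and it diverges from the paper's in a way worth noting. The algebraic reduction in your first paragraph is essentially the paper's Step~2: both expand $\bfL^{\alpha}f$ as a polynomial in $\bar z'$ with holomorphic coefficients and extract the top coefficient to conclude $\bfL_n^{|\alpha|}f=\partial_{z_n}^{|\alpha|}f\equiv 0$ (your device of applying $\partial_{\bar z'}^{\alpha'}$ is just a clean way of saying the coefficients of that polynomial must vanish). Where you genuinely differ is in the remaining case $\bfL_n^{M}f\equiv 0$. The paper runs a downward induction on $M$: it writes $f=\sum_{k<M}f_k(z')z_n^k$, invokes Proposition~3.1 (boundedness of $f\mapsto\bfrho^{M-1}\bfL_n^{M-1}f$ on $A_t^p$, which in turn rests on the Djrbashian--Karapetyan reproducing formula and the Schur-type estimates for the operators $T_{a,b,\alpha'}$) to place $\bfrho^{M-1}f_{M-1}(z')$ in $L_t^p(\calU)$, and kills the top coefficient $f_{M-1}$ by divergence of the inner $z_n$-integral. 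You instead kill all coefficients at once: for a.e.\ $z'$ the slice $f(z',\cdot)$ is a polynomial on a half-plane with finite weighted $L^p$ integral, and no nonzero one-variable polynomial has finite $\int_{\bbR}|q(x+iy)|^p\,dx$ for even a single height $y$ (the leading coefficient in $x$ being that of $q$). This is more elementary and entirely self-contained --- it needs only Fubini/Tonelli and continuity of $f$ on the dense set of good slices --- whereas the paper's route reuses machinery (Proposition~3.1) that it develops anyway for Theorems~1.3 and~1.4. All the small points in your write-up check out: the Wirtinger commutations justifying the binomial expansion, the survival of only the $\beta=\alpha'$ term under $\partial_{\bar z'}^{\alpha'}$, and the degenerate case $\alpha=0$.
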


Theorem \ref{thm:main2} shows how differently Bergman functions on $\calU$ behave when compared with Bergman functions on bounded domains.
This is inspired by \cite[Theorem 2.6]{RY96}, which states that, if $u$ is a harmonic Bergman function
on the upper half-space of $\mathbb{R}^n$ and $\alpha$ is a multi-index,
then $D^{\alpha} u\equiv 0$ implies $u\equiv 0$. However, the situation here is quite different. Note that in Theorem \ref{thm:main2},
the differential operators $\bfL_j$ cannot be replaced by $\frac {\partial }{\partial z_j}$, $j=1,\ldots, n-1$.
For example, $f(z)=(z_n+i)^{-(n+2)}$ lies in $A^p(\calU)$
for any $1\leq p<\infty$, but clearly $\frac {\partial f}{\partial z_j}  \equiv 0$ for $j=1,2,\ldots,n-1$.

%

An analogous result holds for the Hardy functions.
Recall that the Hardy space $H^p(\calU)$ consists of all functions $f$ holomorphic on $\calU$ such that
\begin{equation*}
\|f\|_{H^p(\calU)}=\sup_{\epsilon >0} \Bigg\{ \int\limits_{b\calU} f(u+\epsilon\, \bfi)^p d\bfbeta(u) \Bigg\}^{1/p}<\infty,
\end{equation*}
where $b\calU$ denotes the boundary of $\calU$ and the measure $d\bfbeta$ on $b\calU$ is defined by
the formula
\begin{equation*}
\int\limits_{b\calU} f d\bfbeta = \int\limits_{\bbC^{n-1}\times \bbR}
f(z^{\prime}, t+i|z^{\prime}|^2) dz^{\prime} dt.
\end{equation*}

Since $H^p(\calU) \subset A^{\frac{(n+1)p}{n}}(\calU)$ for all $1\leq p < \infty$ (see the appendix), we have the following.

\begin{corollary}\label{thm:main3}
Suppose that $1\leq p<\infty$ and $f\in H^p(\calU)$. If $\bfL^{\alpha} f\equiv 0$ for some multi-index $\alpha\in \mathbb{N}_0^n$,
then $f\equiv 0$.
\end{corollary}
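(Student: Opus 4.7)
The plan is to reduce Corollary \ref{thm:main3} directly to Theorem \ref{thm:main2} via the embedding of Hardy spaces into unweighted Bergman spaces stated in the paragraph preceding the corollary. Concretely, given $f \in H^p(\calU)$, the appendix provides the inclusion $H^p(\calU) \subset A^{(n+1)p/n}(\calU)$, so setting $q := (n+1)p/n \geq 1$ one has $f \in A^{q}(\calU) = A_0^{q}(\calU)$, which is precisely the space $A_t^q(\calU)$ with $t = 0 > -1$.

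With that observation in hand, the hypotheses of Theorem \ref{thm:main2} are satisfied for $f$ with exponent $q$ and weight parameter $t=0$. Since we are assuming $\bfL^\alpha f \equiv 0$ for some $\alpha \in \mathbb{N}_0^n$, the theorem yields $f \equiv 0$, which is exactly the conclusion sought.

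In this case there is no genuine obstacle to overcome: the only non-trivial ingredient is the Hardy-to-Bergman embedding, and that has been relegated to the appendix and quoted in the excerpt immediately above the corollary. The proof therefore amounts to stringing together these two facts in a single sentence, and the body of the argument will be essentially one line once the embedding is cited.
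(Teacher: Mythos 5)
Your proposal is correct and is exactly the argument the paper uses: the sentence preceding the corollary invokes the embedding $H^p(\calU)\subset A^{(n+1)p/n}(\calU)$ (proved in the appendix) and then applies Theorem \ref{thm:main2} with $t=0$. Nothing further is needed.
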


Theorem \ref{thm:main2} enables us to obtain a new integral representation of Bergman functions on $\calU$. Let $\mathcal{P}_{\lambda}$ be the integral operator given by
\[
\mathcal{P}_{\lambda}f(z) ~=~ c_{\lambda} \int\limits_{\calU} \frac {\bfrho(w)^{\lambda}}{\bfrho(z,w)^{n+1+\lambda}} f(w) dV(w),
\]
where
\[
c_{\lambda} =\frac{\Gamma(n+1+\lambda)}{4\pi^n \Gamma(1+\lambda)}.
\]

\begin{theorem}\label{thm:main}
Suppose that $1\leq p < \infty$, $t>-1$ and $\lambda\in \mathbb{R}$ satisfy
\[
\begin{cases}
\lambda > \frac {t+1}{p}-1,& 1<p<\infty,\\
\lambda \geq t,& p=1.
\end{cases}
\]
 If $f\in A_t^p(\calU)$ then
\begin{equation*}\label{eqn:intrepn1}
f=  \frac{(2i)^N\Gamma(1+\lambda)}{\Gamma(1+\lambda+N)}\mathcal{P}_{\lambda} (\bfrho^N \bfL_n^N f)
\end{equation*}
for any $N\in\bbN_0$.
\end{theorem}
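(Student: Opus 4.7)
The plan is to combine the uniqueness property from Theorem~\ref{thm:main2} with the standard weighted Bergman reproducing identity. Set
\[
g_N := \frac{(2i)^N \Gamma(1+\lambda)}{\Gamma(1+\lambda+N)}\, \mathcal{P}_\lambda(\bfrho^N \bfL_n^N f), \qquad F := f - g_N.
\]
I will verify (a) that $F \in A_t^p(\calU)$ and (b) that $\bfL_n^N F \equiv 0$ on $\calU$. Once both are in hand, Theorem~\ref{thm:main2}, applied with the multi-index $\alpha = (0, \ldots, 0, N) \in \mathbb{N}_0^n$, immediately forces $F \equiv 0$, which is exactly the asserted identity.

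For (a), the elementary identity $\|\bfrho^N \bfL_n^N f\|_{p,t} = \|\bfL_n^N f\|_{p,\,t+Np}$, obtained by absorbing $\bfrho^N$ into the weight, reduces the claim to two standard facts: a derivative estimate asserting that $\bfL_n^N \colon A_t^p(\calU) \to A_{t+Np}^p(\calU)$ is bounded (a consequence of the reproducing formula and pointwise control of derivatives by Bergman-space norms), and the $L_t^p$-boundedness of the operator $\mathcal{P}_\lambda$, which holds under the stated hypothesis on $(\lambda, p, t)$. Together these place $g_N$, and hence $F$, in $A_t^p(\calU)$.

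For (b), I differentiate under the integral sign. Since $\partial_{z_n} \bfrho(z,w) = -i/2$ and the kernel is independent of $\bar z_n$, iteration yields
\[
\partial_{z_n}^N \bfrho(z,w)^{-(n+1+\lambda)} = \left(\frac{i}{2}\right)^N \frac{\Gamma(n+1+\lambda+N)}{\Gamma(n+1+\lambda)}\, \bfrho(z,w)^{-(n+1+\lambda+N)}.
\]
Substituting this expression, and recombining $\bfrho(w)^N$ with $\bfrho(w)^\lambda$ to produce the weight $\bfrho(w)^{\lambda+N}$ of the Bergman operator at parameter $\lambda+N$, the integral becomes a constant multiple of $\mathcal{P}_{\lambda+N}(\bfL_n^N f)(z)$. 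Since $\bfL_n^N f \in A_{t+Np}^p(\calU)$, and since the hypothesis on $(\lambda, p, t)$ is equivalent to the corresponding hypothesis on $(\lambda+N, p, t+Np)$, the reproducing identity gives $\mathcal{P}_{\lambda+N}(\bfL_n^N f) = \bfL_n^N f$. Tracking the Gamma factors and powers of $i/2$ against the prefactor $(2i)^N \Gamma(1+\lambda)/\Gamma(1+\lambda+N)$ then yields $\bfL_n^N g_N = \bfL_n^N f$, so that $\bfL_n^N F \equiv 0$.

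The main technical obstacle lies in the derivative estimates invoked in step (a) and in the careful justification of differentiation under the integral sign in step (b); both ultimately rest on standard pointwise bounds for the Bergman kernel on $\calU$. The bookkeeping of Gamma factors and powers of $i/2$ in step (b) is routine. Once these are in place, the uniqueness property from Theorem~\ref{thm:main2} closes the argument.
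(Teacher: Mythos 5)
Your argument is the same as the paper's for the ``interior'' parameter range, but it has a genuine gap at the endpoint $p=1$, $\lambda=t$, which the theorem's hypothesis $\lambda\geq t$ explicitly allows. In step (a) you assert that ``the $L_t^p$-boundedness of the operator $\mathcal{P}_\lambda$ holds under the stated hypothesis on $(\lambda,p,t)$.'' It does not: $\mathcal{P}_\lambda$ is bounded on $L_t^p(\calU)$ if and only if $p(\lambda+1)>t+1$ (Lemma \ref{thm3.1:DK} and the remark following it, citing \cite{LLHZ19}), and for $p=1$, $\lambda=t$ this fails. So at the endpoint you cannot conclude that $g_N=\mathcal{P}_\lambda(\bfrho^N\bfL_n^N f)$ lies in $A_t^1(\calU)$, and without $F\in A_t^1(\calU)$ the uniqueness property of Theorem \ref{thm:main2} cannot be invoked; your scheme collapses precisely where the statement is sharpest. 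The paper handles this case by an entirely separate direct computation (Lemma \ref{lem:Prhopar}): write $f=\mathcal{P}_\gamma f$ for some $\gamma>\lambda$, differentiate to express $\bfL_n^N f$ as an integral against $\bfrho(\cdot,u)^{-(n+1+\gamma+N)}$, substitute into $\mathcal{P}_\lambda(\bfrho^N\bfL_n^N f)$, and evaluate the resulting double integral via Fubini and the Forelli--Rudin identity \eqref{eqn:keylem2}; this yields $\mathcal{P}_\lambda(\bfrho^N\bfL_n^N f)=c\,f$ outright, with no appeal to boundedness of $\mathcal{P}_\lambda$. You need either this computation or some other device for the endpoint. (For the non-endpoint cases your steps (a) and (b) do match the paper's proof, including the check that $(\lambda+N,p,t+Np)$ again satisfies the reproducing hypothesis.)

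A secondary point: you wrote $\partial_{z_n}^N\bfrho(z,w)^{-(n+1+\lambda)}=(i/2)^N(n+1+\lambda)_N\,\bfrho(z,w)^{-(n+1+\lambda+N)}$, which is indeed what one gets from $\partial_{z_n}\bfrho(z,w)=-i/2$, but then you declared the constant-tracking ``routine'' without doing it. Carrying your formula through, $(2i)^N(i/2)^N=(-1)^N$, so with the stated prefactor one obtains $\bfL_n^N g_N=(-1)^N\bfL_n^N f$, not $\bfL_n^N f$. The paper's Lemma \ref{lem:Prhopar} records the derivative with $(-i/2)^N$ instead, which is what makes its prefactor $(2i)^N$ come out right. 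One of the two signs is a slip, and since the whole point of step (b) is an exact cancellation, this is exactly the kind of bookkeeping that cannot be waved away: as written, your argument only shows $\bfL_n^N F=(1-(-1)^N)\bfL_n^N f$, which is not zero for odd $N$. You should either verify the sign of $\partial_{z_n}\bfrho(z,w)$ carefully and adjust the prefactor accordingly, or at least flag the discrepancy.
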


When $N=0$, this reduces to the known integral representation for Bergman functions on $\calU$ by 
Djrbashian and Karapetyan (\cite[Theorem 2.1]{DK93}).

As an application of Theorem \ref{thm:main}, we give a useful result that relates the Bergman
norm to a ``derivative norm''.
Throughout the paper we will abbreviate inessential constants involved in inequalities by writing $A\lesssim B$ for positive quantities $A$ and $B$ if the ratio $A/B$ has a positive upper bound. Also, $A\approx B$ means both $A\lesssim B$ and $B\lesssim A$.

\begin{theorem}\label{thm:main4}
Let $1\leq p<\infty$, $t>-1$, $N\in \bbN_0$ and $\alpha\in\bbN_0^n$. Then
\[
\|f\|_{p,t} \approx \|\bfrho^N \bfL_n^N f\|_{p,t} \approx \sum_{|\alpha | = N} \| \bfrho^{\langle \alpha \rangle} \bfL^{\alpha} f\|_{p,t}
\]
as $f$ ranges over $A_t^p(\calU)$. Here $\langle \alpha \rangle=|\alpha^{\prime}|/2+\alpha_n$.
\end{theorem}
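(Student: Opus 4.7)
The plan is to prove the two equivalences by combining the integral representation of Theorem~\ref{thm:main} with Forelli--Rudin type weighted $L^p$-estimates for $\mathcal{P}_\lambda$, and then to chain them using the trivial observation that $\|\bfrho^N \bfL_n^N f\|_{p,t}$ is one summand in $\sum_{|\alpha|=N}\|\bfrho^{\langle\alpha\rangle}\bfL^\alpha f\|_{p,t}$ (corresponding to $\alpha=(0,\ldots,0,N)$, for which $\langle\alpha\rangle=N$). The appearance of the weight $\bfrho^{\langle\alpha\rangle}=\bfrho^{|\alpha'|/2+\alpha_n}$ reflects the parabolic geometry of $\calU$: each tangential derivative $\bfL_j$ ($j<n$) costs one factor of $|\bfrho(z,w)|^{-1/2}$ while the normal derivative $\bfL_n$ costs $|\bfrho(z,w)|^{-1}$.

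The inequality $\|f\|_{p,t}\lesssim\|\bfrho^N\bfL_n^N f\|_{p,t}$ is immediate from Theorem~\ref{thm:main}. Choose $\lambda$ sufficiently large that the representation holds and that $\mathcal{P}_\lambda$ is bounded on $L_t^p(\calU)$ (a standard Forelli--Rudin / B\'ekoll\'e type estimate on $\calU$). Then
\[
\|f\|_{p,t}=\frac{(2i)^N\Gamma(1+\lambda)}{\Gamma(1+\lambda+N)}\|\mathcal{P}_\lambda(\bfrho^N\bfL_n^N f)\|_{p,t}\,\lesssim\,\|\bfrho^N\bfL_n^N f\|_{p,t}.
\]
To close the circle of inequalities it therefore suffices to prove $\|\bfrho^{\langle\alpha\rangle}\bfL^\alpha f\|_{p,t}\lesssim\|f\|_{p,t}$ for every $\alpha\in\bbN_0^n$ with $|\alpha|=N$.

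For that upper bound I apply $\bfL_z^\alpha$ to the $N=0$ case of Theorem~\ref{thm:main} (the Djrbashian--Karapetyan formula). A short check shows that the $\bfL_j$'s commute pairwise (the coefficient $2i\bar{z}_j$ of $\bfL_j$ is antiholomorphic, hence annihilated by every $\bfL_k$) and that each $\bfL_j$ kills antiholomorphic functions. Combined with $\bfL_j\bfrho(z,w)=\overline{z_j-w_j}$ for $j<n$ and $\bfL_n\bfrho(z,w)=-i/2$, induction gives the clean formula
\[
\bfL_z^\alpha\bfrho(z,w)^{-s}\,=\,C(s,\alpha)\,\bfrho(z,w)^{-s-|\alpha|}\prod_{j=1}^{n-1}\overline{(z_j-w_j)}^{\alpha_j},
\]
with no lower-order Leibniz terms. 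The pseudodistance inequality
\[
|z'-w'|^2\,\le\,2\,|\bfrho(z,w)|\qquad(z,w\in\calU),
\]
which follows from $\RePt\bfrho(z,w)=\tfrac12\bfrho(z)+\tfrac12\bfrho(w)+\tfrac12|z'-w'|^2$, then yields the pointwise bound $|\bfL_z^\alpha\bfrho(z,w)^{-s}|\lesssim|\bfrho(z,w)|^{-s-\langle\alpha\rangle}$. Differentiating under the integral, $\bfrho(z)^{\langle\alpha\rangle}|\bfL^\alpha f(z)|$ is dominated by the positive operator
\[
T_\alpha|f|(z)\,:=\,\bfrho(z)^{\langle\alpha\rangle}\int_{\calU}\frac{\bfrho(w)^\lambda}{|\bfrho(z,w)|^{n+1+\lambda+\langle\alpha\rangle}}\,|f(w)|\,dV(w),
\]
and a Schur-test argument with a test function of the form $\bfrho^s$ (equivalently, standard Forelli--Rudin estimates on $\calU$) shows that $T_\alpha$ is bounded on $L_t^p(\calU)$ for $\lambda$ large enough, yielding the desired inequality.

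The main technical obstacle is not the commutative calculus of the $\bfL_j$'s, which falls out cleanly, but aligning the scaling: the exponent $\langle\alpha\rangle$ is exactly what the pseudodistance bound produces, and one must verify that the precise balance between the extra weight $\bfrho^{\langle\alpha\rangle}$ and the sharpened kernel $|\bfrho(z,w)|^{-(n+1+\lambda+\langle\alpha\rangle)}$ still lies within the Forelli--Rudin window for $L_t^p$-boundedness. This is what forces $\lambda$ to be chosen sufficiently large and what makes $\langle\alpha\rangle$, rather than $|\alpha|$, the correct weight exponent.
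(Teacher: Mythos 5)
Your proposal is correct and follows essentially the same route as the paper: the bound $\|f\|_{p,t}\lesssim\|\bfrho^N\bfL_n^N f\|_{p,t}$ comes from Theorem~\ref{thm:main} plus the $L_t^p$-boundedness of $\mathcal{P}_\lambda$ (Lemma~\ref{thm3.1:DK}), the chaining via the summand $\alpha=(0',N)$ is the same, and the upper bound $\|\bfrho^{\langle\alpha\rangle}\bfL^{\alpha}f\|_{p,t}\lesssim\|f\|_{p,t}$ is exactly the paper's Proposition~\ref{prop:bddnsofder}, which you re-derive by the same method (differentiating the Djrbashian--Karapetyan formula under the integral and bounding the resulting Forelli--Rudin type operator). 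The only genuine variation is your proof of the pointwise estimate $|z'-w'|^2\le 2|\bfrho(z,w)|$ via the identity $\RePt\bfrho(z,w)=\tfrac12\bigl(\bfrho(z)+\bfrho(w)+|z'-w'|^2\bigr)$, which is more direct than the paper's argument through the affine automorphisms $h_z$ in Lemma~\ref{lem:ptwsest}.
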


Note the proviso ``as $f$ ranges over $A_t^p(\calU)$'' in this theorem; the norm equivalence
stated here would fail if we allowed $f$ to vary over all holomorphic functions on $\calU$.

Theorem \ref{thm:main4} suggests an alternative notion of the Bloch space $\calB$ on $\calU$, which is defined to be the space of holomorphic functions $f$ such that
\[
\| f\|_{\mathcal{B}} := \sum_{|\alpha | = N} \| \bfrho^{\langle \alpha \rangle} \bfL^{\alpha} f\|_\infty<\infty
\]
for any fixed $N\in \mathbb{N}$.
This notion seems more ``agreeable'' than the one given by B\'ekoll\'e in \cite{Bek83}.
Similarly,  we define the Besov spaces $B_p(\calU)$ on $\calU$ to be the spaces of the functions $f$ with
\[
\|f\|_{B_p}:=\sum_{|\alpha | = N} \| \bfrho^{\langle \alpha \rangle} \bfL^{\alpha} f\|_{p,d\tau}<\infty
\]
for sufficiently large $N\in \mathbb{N}$, where $d\tau=\bfrho^{-n-1}dV$.
This generalizes the notion of the Besov spaces on the half-plane given by Semmes \cite{Sem84} to higher dimensions.
The details will be presented in our forthcoming papers.

The paper is organized as follows. We collect several auxiliary lemmas in Section 2. In Section 3 we study the boundedness of the map $f\mapsto \bfrho^{\langle \alpha \rangle} \bfL^{\alpha} f$, which is vital to the proofs of our main results. Sections 4 and 5 
are devoted to the proofs of Theorems \ref{thm:main2} and \ref{thm:main}. We prove Theorem \ref{thm:main4} in Section 6. 
At the end, the proof of the fact $H^p(\calU) \subset A^{\frac{(n+1)p}{n}}(\calU)$ is included in Appendix.

\section{Preliminaries}

In the section we collect several auxiliary lemmas.
The first lemma is the traditional integral representation of Bergman functions on $\calU$.

\begin{lemma}[{\cite[Theorem 2.1]{DK93}}]\label{thm:DK}
Suppose that $1\leq p < \infty$, $t>-1$ and $\lambda\in \mathbb{R}$ satisfy
\[
\begin{cases}
\lambda > \frac {t+1}{p}-1,& 1<p<\infty,\\
\lambda \geq t,& p=1.
\end{cases}
\]
If $f\in A_{t}^p(\calU)$ then $f=\mathcal{P}_{\lambda} f$.
\end{lemma}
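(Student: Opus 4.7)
The plan is to prove the reproducing identity $f = \mathcal{P}_\lambda f$ in three stages: (i) establish boundedness of $\mathcal{P}_\lambda$ on $A^p_t(\calU)$ and absolute convergence of the defining integral at each $z \in \calU$; (ii) verify the identity on a dense subspace of $A^p_t(\calU)$ that sits inside $A^2_\lambda(\calU)$, where the reproducing kernel machinery applies; (iii) extend to all of $A^p_t(\calU)$ by continuity.

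For stage (i), the analytic engine is a Forelli--Rudin type integral estimate on the Siegel upper half-space,
\[
\int_{\calU} \frac{\bfrho(w)^{a}}{|\bfrho(z,w)|^{b}}\, dV(w) \;\approx\; \bfrho(z)^{a+n+1-b},
\]
valid for $a > -1$ and $b > a+n+1$. This can be established either by direct computation in Heisenberg coordinates after translating the base point to $\bfi$, or by transporting the corresponding unit-ball estimate through the Cayley transform $\bbB \to \calU$. The parameter restrictions on $\lambda, p, t$ in the hypothesis are tailored precisely so that Schur's test applies to $\mathcal{P}_\lambda$ with a test function of the form $h(z) = \bfrho(z)^{s}$ for an appropriate exponent $s$, yielding $\|\mathcal{P}_\lambda f\|_{p,t} \lesssim \|f\|_{p,t}$. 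The same estimate, combined with the standard pointwise bound $|f(z)| \lesssim \bfrho(z)^{-(n+1+t)/p}\|f\|_{p,t}$ and H\"{o}lder's inequality, also gives absolute convergence of $\mathcal{P}_\lambda f(z)$ pointwise.

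For stage (ii), I would first confirm that $K_\lambda(z,w) := c_\lambda\, \bfrho(z,w)^{-(n+1+\lambda)}$ is the reproducing kernel of the Hilbert space $A^2_\lambda(\calU)$. This can be done by checking $\langle K_\lambda(\cdot, z), K_\lambda(\cdot, \zeta)\rangle_{A^2_\lambda} = K_\lambda(\zeta, z)$ via a Laplace transform in $\RePt w_n$ (which diagonalizes translation in that variable) followed by Gaussian integration in $w'$; alternatively, one can pull back the classical reproducing kernel of $A^2_\lambda(\bbB)$ through the Cayley transform. Once $f = \mathcal{P}_\lambda f$ holds on $A^2_\lambda(\calU)$, a convenient dense subspace of $A^p_t(\calU)$ lying inside $A^2_\lambda(\calU)$ is the family of truncations
\[
f_\epsilon(z) := f(z + \epsilon \bfi)\, (1 - i\epsilon z_n)^{-M}, \qquad \epsilon > 0,
\]
with $M$ chosen large. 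The shift $z \mapsto z + \epsilon\bfi$ pushes $f$ away from the boundary (since $\bfrho(z+\epsilon\bfi) = \bfrho(z) + \epsilon$), while the rational decay factor ensures $L^2_\lambda$-integrability. Convergence $f_\epsilon \to f$ in $A^p_t$ as $\epsilon \to 0^+$ follows by continuity of translation and dominated convergence.

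The conclusion is then immediate: by stage (i), $\mathcal{P}_\lambda$ is bounded on $A^p_t(\calU)$, and since $\mathcal{P}_\lambda f_\epsilon = f_\epsilon \to f$, continuity forces $\mathcal{P}_\lambda f = f$. The main obstacle I anticipate is the endpoint case $p = 1$, $\lambda = t$, where the Forelli--Rudin estimate is borderline and Schur's test must be supplemented by a direct Fubini-and-estimate argument. A secondary technical point is calibrating $M$ in the approximating family $f_\epsilon$ so that the $L^2_\lambda$-integrability required for stage (ii) and the $L^p_t$-convergence required for stage (iii) coexist; the interval of admissible $M$ is nonempty precisely under the stated parameter constraints.
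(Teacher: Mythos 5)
This lemma is not proved in the paper at all: it is imported verbatim from Djrbashian--Karapetyan \cite[Theorem 2.1]{DK93} and used as a black box, so there is no internal proof to compare your argument against. That said, your three-stage outline (pointwise convergence plus boundedness, verification on a dense subspace sitting inside $A^2_\lambda(\calU)$, extension by continuity) is the standard route to such representation formulas, and most of the ingredients you invoke are genuinely available on $\calU$: the Forelli--Rudin estimate is exactly \eqref{eqn:keylem}, the Schur-test boundedness is Lemma~\ref{thm3.1:DK}, and the family $f(z+\epsilon\bfi)(1-i\epsilon z_n)^{-M}$ is a workable mollification since $|1-i\epsilon z_n|\geq 1+\epsilon\,\ImPt z_n$ on $\calU$.

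Two points in your sketch need repair before it is a proof. First, your primary method for identifying $c_\lambda\,\bfrho(z,w)^{-(n+1+\lambda)}$ as the reproducing kernel of $A^2_\lambda(\calU)$ --- checking $\langle K_\lambda(\cdot,z),K_\lambda(\cdot,\zeta)\rangle=K_\lambda(\zeta,z)$ --- is circular: that identity only shows that $K_\lambda$ reproduces functions in the closed span of the kernel functions, and the density of that span in $A^2_\lambda(\calU)$ is equivalent to $\mathcal{P}_\lambda f=f$ on $A^2_\lambda(\calU)$, i.e.\ to the very statement being proved. You must instead use your alternative route (transporting the known kernel of the weighted Bergman space of the ball through the Cayley transform, which induces an isometric isomorphism up to a Jacobian factor), or a Paley--Wiener decomposition of $A^2_\lambda(\calU)$. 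Second, at the endpoint $p=1$, $\lambda=t$, the operator $\mathcal{P}_t$ is genuinely \emph{unbounded} on $L^1_t(\calU)$ --- the paper records after Lemma~\ref{thm3.1:DK} that $p(\lambda+1)>t+1$ is necessary for boundedness --- so stage (i) fails there outright rather than being merely ``borderline.'' The density argument can still be closed, but via the pointwise estimate $|\mathcal{P}_t g(z)|\leq 2^{n+1+t}c_t\,\bfrho(z)^{-(n+1+t)}\|g\|_{1,t}$, which follows from \eqref{eqn:elemtryeq1} and gives locally uniform convergence of $\mathcal{P}_t f_\epsilon$ to $\mathcal{P}_t f$ without any $L^1$ operator bound. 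With those two substitutions the outline is sound.
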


\begin{lemma}[{\cite[Theorem 3.1]{DK93}}]\label{thm3.1:DK}
Suppose that $1\leq p < \infty$, $t>-1$ and $\lambda\in \mathbb{R}$. If $p(\lambda+1)>t+1$, then $\mathcal{P}_{\lambda}$ is a bounded projection from $L_t^p(\calU)$ to $A_t^p(\calU)$.
\end{lemma}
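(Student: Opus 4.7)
The plan is to first establish $L_t^p(\calU)$-boundedness of $\mathcal{P}_\lambda$, and then deduce the projection property from the reproducing formula already recorded in Lemma \ref{thm:DK}.

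The central analytic input will be a Forelli--Rudin type estimate on $\calU$: for any $c>-1$ and $b>c+n+1$,
\begin{equation*}
\int_{\calU} \frac{\bfrho(w)^c}{|\bfrho(z,w)|^{b}}\, dV(w) \;\approx\; \bfrho(z)^{c+n+1-b},\qquad z\in\calU.
\end{equation*}
This can be obtained by pulling back through the Cayley transform to the unit ball $\ball^n$, where the corresponding identity is classical, or more directly by exploiting the action of the Heisenberg group and the nonisotropic dilations that are adapted to $\calU$. I will also use throughout the conjugate-symmetry $\bfrho(z,w)=\overline{\bfrho(w,z)}$, so that $|\bfrho(z,w)|=|\bfrho(w,z)|$ and the Forelli--Rudin estimate applies equally well in either variable.

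For $1<p<\infty$, I would apply Schur's test. Absorbing the weight, rewrite $\mathcal{P}_\lambda$ as an integral operator with kernel $K(z,w)=\bfrho(w)^{\lambda-t}|\bfrho(z,w)|^{-(n+1+\lambda)}$ against the measure $d\mu=\bfrho^t\,dV$, and take the test function $h(z)=\bfrho(z)^{-s}$ for a parameter $s>0$ to be chosen. Evaluating $\int K(z,w)h(w)^{p'}\,d\mu(w)$ and $\int K(z,w)h(z)^{p}\,d\mu(z)$ and simplifying each by the Forelli--Rudin estimate, the two Schur conditions collapse to the pair
\begin{equation*}
0<s<\frac{\lambda+1}{p'},\qquad \frac{t-\lambda}{p}<s<\frac{t+1}{p}.
\end{equation*}
A quick arithmetic check shows that these intervals meet if and only if $p(\lambda+1)>t+1$, which is exactly the hypothesis. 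The case $p=1$ is outside the reach of Schur's test, but the bound falls out of a single Fubini swap: expanding $\|\mathcal{P}_\lambda f\|_{1,t}$, exchanging the order of integration, and applying Forelli--Rudin to the inner $z$-integral yields $\int |f(w)|\bfrho(w)^{\lambda}\cdot\bfrho(w)^{t-\lambda}\,dV(w)=\|f\|_{1,t}$, and the use of Forelli--Rudin here requires precisely $\lambda>t$, which is what $p(\lambda+1)>t+1$ reduces to at $p=1$.

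Once $L_t^p$-boundedness is established, the projection property is short. For any $f\in L_t^p(\calU)$, differentiation under the integral (justified by locally uniform majorization) shows that $\mathcal{P}_\lambda f$ is holomorphic, and combining this with boundedness gives $\mathcal{P}_\lambda f\in A_t^p(\calU)$. The hypothesis $p(\lambda+1)>t+1$ implies the range required in Lemma \ref{thm:DK}, so applying that reproducing identity to $\mathcal{P}_\lambda f$ yields $\mathcal{P}_\lambda(\mathcal{P}_\lambda f)=\mathcal{P}_\lambda f$, i.e.\ $\mathcal{P}_\lambda^2=\mathcal{P}_\lambda$. The main obstacle in the whole scheme is packaging the Forelli--Rudin estimate uniformly across the relevant parameter range on $\calU$; once that is in hand, the Schur manipulations, the $p=1$ Fubini computation, and the passage from boundedness to the projection property are all routine.
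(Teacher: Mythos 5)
Your proof is correct, but note that the paper does not prove this lemma at all: it is imported verbatim as \cite[Theorem 3.1]{DK93}, and the boundedness half is also the content of \cite[Theorem 1]{LLHZ19} (the operators $T_{a,b,0'}$ of Lemma \ref{lem:IntOper} with $a=0$, $b=\lambda$). Your argument is the standard one used in those sources: Schur's test with power test functions for $1<p<\infty$ and a Fubini computation for $p=1$, followed by holomorphy of the output and the reproducing formula of Lemma \ref{thm:DK} to get idempotency and surjectivity onto $A_t^p(\calU)$. I checked the arithmetic: with $h(z)=\bfrho(z)^{-s}$ the two Schur conditions do reduce to $0<s<(\lambda+1)/p'$ and $(t-\lambda)/p<s<(t+1)/p$, and the only nontrivial compatibility requirement, $(\lambda+1)/p'>(t-\lambda)/p$, is equivalent to $p(\lambda+1)>t+1$ (the other endpoint comparisons follow from $t>-1$ and $\lambda>-1$, the latter being implied by the hypothesis); at $p=1$ the Fubini step needs exactly $\lambda>t$. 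One economy worth noting: the Forelli--Rudin estimate you treat as the main input to be established is already available in the paper, in exact form with explicit constant, as Lemma 2.4 (equation \eqref{eqn:keylem}), so no detour through the Cayley transform or Heisenberg dilations is needed.
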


In fact, the condition $p(\lambda+1)>t+1$ is also necessary for the boundedness of $\mathcal{P}_{\lambda}$. See \cite[Theorem 1]{LLHZ19}.

\begin{lemma}[{\cite[Key Lemma]{LLHZ19}}]\label{lem:keylem2}
Suppose that $r,\,s>0$, $t>-1$ and $r+s-t>n+1$. Then
\begin{equation}\label{eqn:keylem2}
\int\limits_{\calU}  \frac {\bfrho(w)^{t}} {\bfrho(z,w)^{r} \bfrho(w,u)^{s}} dV(w)
~=~ \frac {C_1(n,r,s,t)} {\bfrho(z,u)^{r+s-t-n-1}}
\end{equation}
holds for all $z, u\in \calU$, where
\begin{equation*}\label{eqn:const}
C_1(n,r,s,t) :=  \frac {4\pi^{n} \Gamma(1+t)\Gamma(r+s-t-n-1)}{\Gamma(r)
\Gamma(s)}.
\end{equation*}
\end{lemma}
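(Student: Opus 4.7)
The plan is to evaluate the integral by reducing both singular kernels to Laplace-type integrals and then carrying out a sequence of Gaussian/gamma integrations. Since a routine computation gives $\RePt\bfrho(z,w) = \tfrac12\bigl(\bfrho(z)+\bfrho(w)+|z'-w'|^2\bigr) > 0$ on $\calU\times\calU$, the gamma identity $A^{-r} = \Gamma(r)^{-1}\int_0^\infty \xi^{r-1}e^{-\xi A}d\xi$ applies to both $\bfrho(z,w)^{-r}$ and $\bfrho(w,u)^{-s}$. Granting Fubini for the moment, the left-hand side of \eqref{eqn:keylem2} becomes
\[
\frac{1}{\Gamma(r)\Gamma(s)}\int_0^\infty\!\!\int_0^\infty \xi^{r-1}\eta^{s-1}\mathcal I(\xi,\eta)\,d\xi\,d\eta,\qquad \mathcal I(\xi,\eta) := \int_\calU \bfrho(w)^t e^{-\xi\bfrho(z,w)-\eta\bfrho(w,u)}dV(w),
\]
so the problem reduces to computing $\mathcal I(\xi,\eta)$ and then performing one final Laplace integral.

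For $\mathcal I(\xi,\eta)$, I would use the foliation $w = (w',\, s + i(\rho+|w'|^2))$ with $w'\in\bbC^{n-1}$, $s\in\bbR$, $\rho>0$, so that $\bfrho(w) = \rho$ and $dV(w) = dV(w')\,ds\,d\rho$. Expanding directly gives the exponent
\[
-\xi\bfrho(z,w)-\eta\bfrho(w,u) = \tfrac{i}{2}(\xi z_n-\eta\bar u_n) + \tfrac{is(\eta-\xi)}{2} - \tfrac{(\xi+\eta)(\rho+|w'|^2)}{2} + \xi z'\cdot\bar{w'} + \eta w'\cdot\bar{u'},
\]
and the three inner integrals decouple neatly: the $\rho$-integral is a gamma integral contributing $2^{t+1}\Gamma(1+t)(\xi+\eta)^{-(t+1)}$, the $w'$-integral is Gaussian in $\bbC^{n-1}$ and yields $(2\pi/(\xi+\eta))^{n-1}\exp(\tfrac{2\xi\eta}{\xi+\eta}z'\cdot\bar u')$ via $\int_\bbC e^{-\kappa|w|^2+aw+b\bar w}dV = (\pi/\kappa)e^{ab/\kappa}$, while the $s$-integral forces $\xi = \eta$ as a $\delta$-distribution with mass $4\pi$. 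Collapsing $\eta=\xi$ and using the identity $\tfrac{i\xi}{2}(z_n-\bar u_n)+\xi z'\cdot\bar u' = -\xi\bfrho(z,u)$ to absorb the constant-in-$w$ prefactors consolidates everything into
\[
\int_0^\infty\!\!\int_0^\infty \xi^{r-1}\eta^{s-1}\mathcal I(\xi,\eta)\,d\xi\,d\eta = 4\pi^n\Gamma(1+t)\int_0^\infty \xi^{r+s-t-n-2} e^{-\xi\bfrho(z,u)}d\xi.
\]
A final gamma integral, convergent at infinity precisely because $r+s-t>n+1$, produces $\Gamma(r+s-t-n-1)\,\bfrho(z,u)^{-(r+s-t-n-1)}$, and dividing through by $\Gamma(r)\Gamma(s)$ recovers the constant $C_1(n,r,s,t)$.

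The main obstacle is the rigorous handling of the $\delta$-function that arises from the $s$-integration, since the oscillatory factor $e^{is(\eta-\xi)/2}$ is not absolutely integrable and so Fubini is not directly applicable to the original triple integral. To make the argument rigorous I would insert a Gaussian regulator $e^{-\varepsilon s^2}$ for $\varepsilon>0$: the full integrand is then dominated by $|\bfrho(w)|^t e^{-\xi\RePt\bfrho(z,w)-\eta\RePt\bfrho(w,u)-\varepsilon s^2}$, which is absolutely integrable, so the computation above proceeds unconditionally with $\delta(\eta-\xi)$ replaced by $\sqrt{\pi/\varepsilon}\,e^{-(\eta-\xi)^2/(4\varepsilon)}$. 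The hypotheses $r,s>0$, $t>-1$, and $r+s-t>n+1$ furnish an $\varepsilon$-uniform integrable majorant in $(\xi,\eta)$ on $(0,\infty)^2$, and dominated convergence as $\varepsilon\to 0^+$ then delivers \eqref{eqn:keylem2}.
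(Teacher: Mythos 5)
The paper itself offers no proof of this lemma---it is quoted verbatim from the Key Lemma of \cite{LLHZ19}---so your argument has to stand on its own. Its formal core checks out: $\RePt\bfrho(z,w)=\frac12\bigl(\bfrho(z)+\bfrho(w)+|z'-w'|^2\bigr)>0$ does hold, the exponent decomposes as you claim under the foliation $w=(w',x+i(\rho+|w'|^2))$, and the $\rho$-, $w'$- and $x$-integrations followed by the final gamma integral reproduce exactly the constant $C_1(n,r,s,t)$ (I verified the bookkeeping: the collapsed integrand is $4\pi^n\Gamma(1+t)\xi^{r+s-t-n-2}e^{-\xi\bfrho(z,u)}$). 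This Laplace/Fourier route is a legitimate, self-contained way to prove Forelli--Rudin type identities on $\calU$.

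There are, however, two genuine gaps, both at the places where the hypotheses actually do work. First, you never establish absolute convergence of the original integral, i.e.\ $\int_{\calU}\bfrho(w)^t|\bfrho(z,w)|^{-r}|\bfrho(w,u)|^{-s}\,dV(w)<\infty$; this is needed to pass to the limit $\varepsilon\to0^+$ on the \emph{left}-hand side after inserting the regulator $e^{-\varepsilon x^2}$ (you treat only the right-hand side). The bound $\RePt\bfrho>0$ cannot supply this: replacing $|\bfrho|$ by $\RePt\bfrho$ in the denominators yields an integrand independent of $x=\RePt w_n$ and hence a divergent integral, so absolute convergence must come from the growth of $|\ImPt\bfrho(z,w)|$ in $x$ and requires a separate (routine but real) verification. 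Second, the ``$\varepsilon$-uniform integrable majorant in $(\xi,\eta)$'' is asserted rather than exhibited. After the $w$-integration the modulus of the $(\xi,\eta)$-integrand is controlled by $\xi^{r-1}\eta^{s-1}(\xi+\eta)^{-t-n}e^{-(\xi\bfrho(z)+\eta\bfrho(u))/2}$ (using $|\xi z'+\eta u'|^2\le(\xi+\eta)(\xi|z'|^2+\eta|u'|^2)$), and the integrability of this function near the corner $\xi=\eta=0$ is precisely equivalent to $r+s-t>n+1$; one must then still justify the concentration of the Gaussian on the diagonal against this majorant, which is not locally bounded when $r<1$ or $s<1$. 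Two smaller slips: the regularized $x$-integral is $\sqrt{\pi/\varepsilon}\,e^{-(\eta-\xi)^2/(16\varepsilon)}$, of total mass $4\pi$ consistent with your normalization of the $\delta$, not $e^{-(\eta-\xi)^2/(4\varepsilon)}$ (mass $2\pi$, which would cost you a factor of $2$ in $C_1$); and the condition $r+s-t>n+1$ governs convergence of the final gamma integral at $\xi=0$, not at infinity, where $\RePt\bfrho(z,u)>0$ does the job.
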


\begin{lemma}[{\cite[Lemma 5]{LLHZ19}}]
Let $s,t\in \mathbb{R}$. Then we have
\begin{equation}\label{eqn:keylem}
\int\limits_{\calU} \frac{\bfrho(w)^{t}} {|\bfrho(z,w)|^{s}} dV(w) ~=~
\begin{cases}
\dfrac {C_2(n,s,t)} {\bfrho(z)^{s-t-n-1}}, &
\text{ if } t>-1 \text{ and } s-t>n+1\\[12pt]
+\infty, &  otherwise
\end{cases}
\end{equation}
for all $z\in \calU$, where
\[
C_2(n,s,t):=\frac {4 \pi^{n} \Gamma(1+t) \Gamma(s-t-n-1)} {\Gamma^2\left(s/2\right)}.
\]
\end{lemma}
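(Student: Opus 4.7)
The plan is to exploit the homogeneous structure of $\calU$. First I would use the transitive action on $\calU$ by the Heisenberg translations $\tau_h(z)=(z'+h',\,z_n+h_n+2iz'\cdot\overline{h'})$ for $h=(h',h_n)\in b\calU$, together with the anisotropic dilations $\delta_r(z)=(rz',r^2z_n)$, $r>0$. A direct check shows $\bfrho(\tau_h z,\tau_h w)=\bfrho(z,w)$ and $\bfrho(\delta_r z,\delta_r w)=r^2\bfrho(z,w)$, while the Jacobians of $\tau_h$ and $\delta_r$ acting on $dV$ are $1$ and $r^{2n+2}$ respectively. Choosing $\tau_h\circ\delta_r$ that sends a given $z\in\calU$ to $i\mathbf{e}_n=(0,\ldots,0,i)$, one transfers the integral to this basepoint and, from the weights of $\bfrho(w)^t$ and $|\bfrho(z,w)|^{-s}$ combined with the Jacobian $r^{2n+2}$, picks up a single overall factor $\bfrho(z)^{-(s-t-n-1)}$. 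This reduces the task to computing one explicit constant.

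At $z=i\mathbf{e}_n$ one has $\bfrho(z)=1$ and $\bfrho(i\mathbf{e}_n,w)=\tfrac12(1+i\bar w_n)$. I would then parametrize $\calU$ by $w=(w',\,u+i(v+|w'|^2))$ with $u\in\bbR$, $v>0$, $w'\in\bbC^{n-1}$, so that $\bfrho(w)=v$, $dV(w)=dV(w')\,du\,dv$, and $|\bfrho(i\mathbf{e}_n,w)|^2=\tfrac14\bigl[(1+v+|w'|^2)^2+u^2\bigr]$. Because the integrand is nonnegative, Tonelli allows the integral to factor as an iterated integral in $u$, then $v$, then $w'$.

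The three one-dimensional integrals are then evaluated in sequence. The innermost $u$-integral is a classical Cauchy-type integral, convergent iff $s>1$, yielding $\sqrt{\pi}\,\Gamma\!\bigl(\tfrac{s-1}{2}\bigr)\Gamma\!\bigl(\tfrac{s}{2}\bigr)^{-1}(1+v+|w'|^2)^{1-s}$. After the substitution $v=(1+|w'|^2)\xi$ the $v$-integral becomes a Beta integral, finite iff $t>-1$ and $s>t+2$, producing $(1+|w'|^2)^{t+2-s}\,\Gamma(t+1)\Gamma(s-t-2)/\Gamma(s-1)$. Passing to polar coordinates on $\bbC^{n-1}$ reduces the remaining integral to one more Beta integral, convergent iff $s>t+n+1$, and contributing $\pi^{n-1}\,\Gamma(s-t-n-1)/\Gamma(s-t-2)$. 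The factors of $\Gamma(s-t-2)$ cancel, and the main bookkeeping obstacle is then the final gamma-function simplification: invoking Legendre's duplication formula $\Gamma(s-1)=2^{s-2}\pi^{-1/2}\Gamma\!\bigl(\tfrac{s-1}{2}\bigr)\Gamma\!\bigl(\tfrac{s}{2}\bigr)$ collapses the remaining ratio into $\Gamma(s/2)^{-2}$, producing exactly $C_2(n,s,t)$. For the divergence assertion, if $t\leq-1$ then the $v$-integral already blows up near $v=0$ for every $w'$; if $s-t\leq n+1$ (and the previous conditions hold) then either the outer $|w'|$-integral at infinity or, when $s\leq t+2$, the $v$-integral itself diverges, which covers the ``otherwise'' case.
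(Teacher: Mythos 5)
Your argument is correct, but it cannot be compared line-by-line with the paper's proof because the paper offers none: Lemma 2.4 is quoted verbatim from \cite[Lemma 5]{LLHZ19} and used as a black box. What you have done is reconstruct the proof from scratch, and the reconstruction is sound. The reduction to the base point $\bfi=(0',i)$ via Heisenberg translations and the parabolic dilations $\delta_r$ is legitimate (indeed, the translation part is exactly the family $h_z$ that the paper itself introduces in the proof of Lemma 3.2, with $h_z(z)=\bfrho(z)\bfi$, so composing with $\delta_{\bfrho(z)^{-1/2}}$ gives the normalization you want and produces the factor $\bfrho(z)^{t-s+n+1}$ from the weights and the Jacobian $\bfrho(z)^{n+1}$). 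The three iterated integrals check out: the $u$-integral gives $\sqrt{\pi}\,\Gamma(\tfrac{s-1}{2})\Gamma(\tfrac{s}{2})^{-1}A^{1-s}$ for $s>1$, the $v$- and $w'$-integrals are Beta integrals with the constants you state, the $\Gamma(s-t-2)$ factors cancel, and the duplication formula collapses the remainder to $4\pi^n\Gamma(1+t)\Gamma(s-t-n-1)\Gamma(s/2)^{-2}=C_2(n,s,t)$ once the factor $2^s$ from $|\bfrho(\bfi,w)|=\tfrac12|1+i\bar w_n|$ is included. One cosmetic remark on the divergence discussion: the subcase $s\le 1$ (where the innermost $u$-integral already diverges) is not named explicitly, but it is harmless --- under $t>-1$ it forces $s\le t+2$, and Tonelli still yields $+\infty$ for the full integral; you may want to say this in one sentence so that every branch of ``otherwise'' is visibly covered. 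In short, your proposal supplies a complete, self-contained proof of a result the paper only cites, by what is almost certainly the same method used in the cited source.
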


\begin{lemma}[{\cite[Lemma 2.1]{LS20}}]
We have
\begin{equation}\label{eqn:elemtryeq1}
 2|\bfrho(z,w)|\geq \max\{\bfrho(z),\bfrho(w)\}
\end{equation}
for any $z, w\in \calU$.
\end{lemma}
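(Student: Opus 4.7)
The plan is to prove the sharper statement that $2\RePt \bfrho(z,w) \geq \bfrho(z) + \bfrho(w)$, from which the claim follows since $|\bfrho(z,w)| \geq \RePt \bfrho(z,w)$ and $\bfrho(z) + \bfrho(w) \geq \max\{\bfrho(z),\bfrho(w)\}$ (both summands being nonnegative on $\calU$).

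To establish that inequality I would simply unpack the definitions. Writing
\[
2\bfrho(z,w) = i(\overline{w}_n - z_n) - 2\, z' \cdot \overline{w'},
\]
taking real parts gives
\[
2\RePt \bfrho(z,w) = \ImPt z_n + \ImPt w_n - 2\RePt(z' \cdot \overline{w'}),
\]
where I used $\RePt[i(\overline{w}_n - z_n)] = \ImPt w_n + \ImPt z_n$. On the other hand, $\bfrho(z) = \ImPt z_n - |z'|^2$ and similarly for $w$, so
\[
2\RePt \bfrho(z,w) - \bigl(\bfrho(z) + \bfrho(w)\bigr) = |z'|^2 + |w'|^2 - 2\RePt(z' \cdot \overline{w'}) = |z' - w'|^2 \geq 0.
\]

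Combining the steps, for $z, w \in \calU$ we have $\bfrho(z), \bfrho(w) > 0$, whence
\[
2|\bfrho(z,w)| \geq 2\RePt \bfrho(z,w) \geq \bfrho(z) + \bfrho(w) \geq \max\{\bfrho(z), \bfrho(w)\},
\]
which is \eqref{eqn:elemtryeq1}. There is no real obstacle here; the only thing to be careful about is the sign bookkeeping when taking real and imaginary parts of the conjugated quantity $\overline{w}_n$, and the observation that one in fact recovers a squared norm $|z'-w'|^2$ on the right-hand side, which is what makes the inequality work cleanly.
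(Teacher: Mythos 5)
Your proof is correct: the computation $2\RePt \bfrho(z,w) - \bfrho(z) - \bfrho(w) = |z'-w'|^2 \geq 0$ is right, and the chain $2|\bfrho(z,w)| \geq 2\RePt\bfrho(z,w) \geq \bfrho(z)+\bfrho(w) \geq \max\{\bfrho(z),\bfrho(w)\}$ (using positivity of $\bfrho$ on $\calU$) is exactly the standard argument. The paper itself gives no proof here, only a citation to \cite[Lemma 2.1]{LS20}, and your argument in fact establishes the slightly stronger inequality $2\RePt\bfrho(z,w)\geq \bfrho(z)+\bfrho(w)$.
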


\section{The boundedness of the map $f\mapsto \bfrho^{\langle \alpha \rangle} \bfL^{\alpha} f$}

\begin{proposition}\label{prop:bddnsofder}
Suppose $1\leq p < \infty$ and $t>-1$. Then for any $\alpha\in \bbN_{0}^n$, the map $f\mapsto \bfrho^{\langle \alpha \rangle} \bfL^{\alpha} f$
is a bounded linear operator from $A_t^p(\calU)$ into $L_t^p(\calU)$.
In particular, for any $N\in \mathbb{N}_0$,
the map $f\mapsto \bfL_n^N f$ is a bounded linear operator from $A_t^p(\calU)$ into $A_{t+Np}^p(\calU)$.
\end{proposition}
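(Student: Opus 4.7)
The plan is to reduce the estimate to the boundedness of a standard Bergman-type projection. Given $f\in A_t^p(\calU)$, I would first choose $\lambda$ satisfying $\lambda>(t+1)/p-1$ (and $\lambda\geq t$ if $p=1$); this is the range in which the Djrbashian--Karapetyan identity $f=\mathcal{P}_{\lambda}f$ holds (Lemma \ref{thm:DK}), and also, since $p(\lambda+1)>t+1$, the range in which Lemma \ref{thm3.1:DK} guarantees $\mathcal{P}_{\lambda}:L_t^p(\calU)\to L_t^p(\calU)$ is bounded. The strategy is then to push $\bfL^{\alpha}$ inside this representation, estimate the differentiated kernel pointwise, and compare the resulting majorant with $\mathcal{P}_{\lambda}(|f|)$.

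The key computation is the action of $\bfL^{\alpha}$ in the variable $z$ on $\bfrho(z,w)^{-(n+1+\lambda)}$. A direct differentiation gives $\bfL_j\bfrho(z,w)=-(\bar{w}_j-\bar{z}_j)$ for $j<n$ and $\bfL_n\bfrho(z,w)=-i/2$. Because the factors $\bar{w}_j-\bar{z}_j$ are antiholomorphic in $z$ and hence annihilated by every $\bfL_k$, and because the $\bfL_j$'s pairwise commute, an easy induction on $|\alpha|$ will produce
\[
\bfL_z^{\alpha}\,\bfrho(z,w)^{-(n+1+\lambda)} \;=\; C_{\alpha,\lambda}\prod_{j=1}^{n-1}(\bar{w}_j-\bar{z}_j)^{\alpha_j}\,\bfrho(z,w)^{-(n+1+\lambda+|\alpha|)}
\]
for an explicit constant $C_{\alpha,\lambda}$. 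Differentiating $f=\mathcal{P}_{\lambda}f$ under the integral sign (justified a posteriori by the pointwise bounds below) then yields an explicit integral formula for $\bfL^{\alpha}f$.

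The pointwise control is then assembled from two elementary inequalities. Expanding definitions gives the identity $2\,\RePt\bfrho(z,w)=\bfrho(z)+\bfrho(w)+|z'-w'|^2$, so $|\bar{w}_j-\bar{z}_j|\leq\sqrt{2|\bfrho(z,w)|}$ for $j<n$; this collapses the product $\prod_{j<n}|\bar{w}_j-\bar{z}_j|^{\alpha_j}$ into a factor $|\bfrho(z,w)|^{|\alpha'|/2}$, reducing the denominator exponent to $n+1+\lambda+\langle\alpha\rangle$. Invoking \eqref{eqn:elemtryeq1} in the form $\bfrho(z)^{\langle\alpha\rangle}\leq(2|\bfrho(z,w)|)^{\langle\alpha\rangle}$ then eats the remaining $\langle\alpha\rangle$ powers, leaving
\[
\bfrho(z)^{\langle\alpha\rangle}\,\bigl|\bfL^{\alpha}f(z)\bigr| \;\lesssim\; \int_{\calU}\frac{\bfrho(w)^{\lambda}}{|\bfrho(z,w)|^{n+1+\lambda}}\,|f(w)|\,dV(w) \;\approx\; \mathcal{P}_{\lambda}(|f|)(z).
\]
Applying Lemma \ref{thm3.1:DK} to the right-hand side gives $\|\bfrho^{\langle\alpha\rangle}\bfL^{\alpha}f\|_{p,t}\lesssim\|f\|_{p,t}$, which is the first assertion. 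The ``in particular'' statement is obtained by specializing to $\alpha=(0,\ldots,0,N)$: then $\langle\alpha\rangle=N$, the function $\bfL_n^N f$ is holomorphic, and $\|\bfrho^N\bfL_n^N f\|_{p,t}^p=\|\bfL_n^N f\|_{p,t+Np}^p$.

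The main obstacle I anticipate is bookkeeping: carrying out the induction for $\bfL^{\alpha}\bfrho^{-s}$ cleanly, and verifying that the half-integer exponents supplied by $|\bar{w}_j-\bar{z}_j|\leq\sqrt{2|\bfrho(z,w)|}$ for $j<n$ combine with the integer exponent $\alpha_n$ coming from $\bfL_n$ to produce exactly the weight $\langle\alpha\rangle=|\alpha'|/2+\alpha_n$ appearing in the statement. Justifying the differentiation under the integral sign is routine, as the pointwise majorant above already provides the local $L^1$ domination required by dominated convergence.
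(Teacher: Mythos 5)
Your proposal follows the same strategy as the paper: differentiate the reproducing formula $f=\mathcal{P}_{\lambda}f$, observe that $\bfL^{\alpha}$ applied to $\bfrho(z,w)^{-(n+1+\lambda)}$ produces the factor $(\bar z'-\bar w')^{\alpha'}\bfrho(z,w)^{-(n+1+\lambda+|\alpha|)}$, and control $|z'-w'|$ by $|\bfrho(z,w)|^{1/2}$. Two local differences are worth noting. First, your proof of the estimate $|z'-w'|^2\leq 2|\bfrho(z,w)|$ via the identity $2\RePt\bfrho(z,w)=\bfrho(z)+\bfrho(w)+|z'-w'|^2$ is correct and more elementary than the paper's Lemma \ref{lem:ptwsest}, which routes through the Heisenberg-type automorphisms $h_z$; your identity in fact gives that lemma with an explicit constant. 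Second, you absorb the weight $\bfrho(z)^{\langle\alpha\rangle}$ into the kernel using \eqref{eqn:elemtryeq1}, whereas the paper keeps it and invokes the two-parameter operator family $T_{a,b,\alpha'}$ of Lemma \ref{lem:IntOper}; both reductions land on the same external boundedness result. The one point you must repair is the final citation: after absorbing the weight, your majorant is
\[
\int\limits_{\calU}\frac{\bfrho(w)^{\lambda}}{|\bfrho(z,w)|^{n+1+\lambda}}\,|f(w)|\,dV(w),
\]
which is \emph{not} comparable to $\mathcal{P}_{\lambda}(|f|)(z)$ (the latter has the complex quantity $\bfrho(z,w)^{n+1+\lambda}$, without modulus, in the denominator), and Lemma \ref{thm3.1:DK} asserts boundedness only of $\mathcal{P}_{\lambda}$ itself; boundedness of an operator does not transfer to the operator with the absolute value of its kernel. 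What you need is the boundedness on $L_t^p(\calU)$ of the positive-kernel operator $T_{0,\lambda,0'}$, which is exactly \cite[Theorem 1]{LLHZ19} (equivalently, Lemma \ref{lem:IntOper} with $a=0$, $b=\lambda$, $\alpha'=0'$), valid under $0<t+1<p(\lambda+1)$, i.e.\ for $\lambda$ chosen large. With that substitution your argument is complete; the ``in particular'' clause and the justification of differentiation under the integral are handled correctly.
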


To prove Proposition \ref{prop:bddnsofder}, we need two lemmas.
\begin{lemma}\label{lem:ptwsest}
For any fixed $\alpha^{\prime}\in \mathbb{N}_0^{n-1}$, we have
\[
\big|(z^{\prime}-w^{\prime})^{\alpha^{\prime}}\big| \lesssim |\bfrho(z,w)|^{\frac {|\alpha^{\prime}|}{2}}
\]
for all $z,w\in \calU$.
\end{lemma}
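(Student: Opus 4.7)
The plan is to reduce the pointwise inequality to a single algebraic identity relating $\bfrho(z,w)$ to the defining function $\bfrho$ and the distance $|z'-w'|^2$. Specifically, I would first verify by a direct computation, using
\[
\bfrho(z,w)=\tfrac{i}{2}(\overline{w}_n-z_n)-z'\cdot\overline{w'},
\]
that
\[
2\RePt\bfrho(z,w)=\bfrho(z)+\bfrho(w)+|z'-w'|^{2}.
\]
Indeed, $2\RePt\bigl(\tfrac{i}{2}(\overline{w}_n-z_n)\bigr)=\ImPt z_n+\ImPt w_n$ and $-2\RePt(z'\cdot\overline{w'})=|z'-w'|^{2}-|z'|^{2}-|w'|^{2}$, and combining these with $\bfrho(z)=\ImPt z_n-|z'|^{2}$ gives the claimed identity.

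Once this identity is in hand, the lemma falls out immediately. Since $z,w\in\calU$, both $\bfrho(z)$ and $\bfrho(w)$ are strictly positive, hence
\[
|z'-w'|^{2}\le 2\RePt\bfrho(z,w)\le 2|\bfrho(z,w)|,
\]
so that $|z'-w'|\le\sqrt{2}\,|\bfrho(z,w)|^{1/2}$. Taking the appropriate product of coordinate differences,
\[
\bigl|(z'-w')^{\alpha'}\bigr|=\prod_{j=1}^{n-1}|z_j-w_j|^{\alpha'_j}\le |z'-w'|^{|\alpha'|}\le 2^{|\alpha'|/2}|\bfrho(z,w)|^{|\alpha'|/2},
\]
which is the desired estimate with implicit constant $2^{|\alpha'|/2}$.

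There is no genuine obstacle here: the proof is a one-line algebraic identity followed by an elementary inequality. The only ``step of substance'' is recognising the Bergman-kernel-style polarisation identity that expresses $2\RePt\bfrho(z,w)$ as $\bfrho(z)+\bfrho(w)+|z'-w'|^{2}$; after that, positivity of $\bfrho(z)$ and $\bfrho(w)$ on $\calU$ does all the work. I would present the proof in exactly these two short steps.
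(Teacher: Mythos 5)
Your proof is correct. Both your argument and the paper's reduce the lemma to the single estimate $|z'-w'|^{2}\lesssim|\bfrho(z,w)|$ and then take coordinatewise powers, but you reach that estimate by a genuinely different route. The paper uses the affine automorphisms $h_z$ of $\calU$ (which send $z$ to $\bfrho(z)\bfi$ and leave $\bfrho(\cdot,\cdot)$ invariant) to reduce to the special inequality $|u'|^{2}<2|\bfrho(u,s\bfi)|$, whereas you verify directly the polarisation identity
\[
2\RePt\bfrho(z,w)=\bfrho(z)+\bfrho(w)+|z'-w'|^{2},
\]
which is correct (the computation of both real parts checks out), and then use positivity of $\bfrho$ on $\calU$. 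Your route is more elementary and yields the explicit constant $2^{|\alpha'|/2}$ together with the stronger byproduct that $\RePt\bfrho(z,w)>0$ for $z,w\in\calU$; the paper's route is slightly more conceptual, exhibiting the Heisenberg-type invariance that underlies the identity and that recurs elsewhere in this circle of ideas. Either proof is acceptable as written.
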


\begin{proof}
To each $z\in\calU$, we define the following (holomorphic) affine self-mapping of $\calU$:
\[
h_z(u) ~:=~ \left(u^{\prime} - z^{\prime}, u_n - \RePt z_n - 2i u^{\prime} \cdot \overline{z^{\prime}}  + i|z^{\prime}|^2 \right),
\quad u\in \calU.
\]
All these mappings are holomorphic automorphisms of $\calU$. See \cite[Chapter XII]{Ste93}.
In particular, we have
\[
h_z(z)=\left(0^{\prime}, i \left(\ImPt z_n - |z^{\prime}|^2 \right)\right) = \bfrho(z) \bfi,
\]
where $\bfi=(0^{\prime},i)$.
Also, an easy calculation shows that
\[
\bfrho(h_z(u),h_z(v))=\bfrho(u,v).
\]

Note that
\begin{equation*}\label{eqn:eleinq}
2|\bfrho(u, s\bfi)|=|u_n+is|> \ImPt u_n \geq |u^{\prime}|^2
\end{equation*}
for any $u\in \calU$ and any $s>0$.
Taking $u=h_z(w)$ and $s=\bfrho(z)$ in the above inequality, and associating the previous argument, we get
\[
\big|(h_z(w))^{\prime}\big|^2 \lesssim \left|\bfrho(h_z(w), \bfrho(z) \bfi)\right| = \left|\bfrho(h_z(w), h_z(z))\right|= |\bfrho(w,z)|.
\]
Consequently,
\[
\big|(z^{\prime}-w^{\prime})^{\alpha^{\prime}}\big| \lesssim \big|(z^{\prime}-w^{\prime})\big|^{|\alpha^{\prime}|}
= \big|(h_z(w))^{\prime}\big|^{|\alpha^{\prime}|} \lesssim |\bfrho(z,w)|^{\frac {|\alpha^{\prime}|}{2}},
\]
as desired.
\end{proof}

Given $a,b \in \bbR$ and $\alpha^{\prime}\in \bbN_{0}^{n-1}$ we define the integral operator $T_{a,b,\alpha^{\prime}}$
by
\[
T_{a,b,\alpha^{\prime}} f(z) := \bfrho(z)^{a} \int\limits_{\calU} \frac {
\big|(z^{\prime}-w^{\prime})^{\alpha^{\prime}}\big|  \bfrho (w)^{b} } {|\bfrho(z,w)|^{n+1+a+b+\frac {|\alpha^{\prime}|}{2}}} f(w) dV(w),
\quad z\in \calU.
\]

\begin{lemma}\label{lem:IntOper}
Suppose that $1\leq p\leq \infty$ and $s\in\mathbb{R}$. Then $T_{a,b,\alpha^{\prime}}$ is bounded on $L_s^p(\calU)$ if $-pa<s+1<p(b+1)$.
\end{lemma}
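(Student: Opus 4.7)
The plan is to eliminate the polynomial factor $(z^{\prime}-w^{\prime})^{\alpha^{\prime}}$ using Lemma~\ref{lem:ptwsest} and then appeal to the Schur test. The pointwise bound $|(z^{\prime}-w^{\prime})^{\alpha^{\prime}}| \lesssim |\bfrho(z,w)|^{|\alpha^{\prime}|/2}$ yields
\[
|T_{a,b,\alpha^{\prime}} f(z)| \;\lesssim\; \bfrho(z)^a \int_\calU \frac{\bfrho(w)^b\,|f(w)|}{|\bfrho(z,w)|^{n+1+a+b}}\,dV(w),
\]
so it suffices to prove boundedness on $L_s^p(\calU)$ of the positive integral operator with kernel
\[
K(z,w) \;:=\; \bfrho(z)^a\,\bfrho(w)^b\,|\bfrho(z,w)|^{-(n+1+a+b)}.
\]

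For the generic range $1 < p < \infty$, I would apply the Schur test on $L^p(\bfrho^s\,dV)$ with trial function $h(z) = \bfrho(z)^t$, where the exponent $t \in \mathbb{R}$ is to be selected at the end. The two required bounds take the form
\[
\int_\calU K(z,w)\,\bfrho(w)^{tp'}\,dV(w) \leq C\,\bfrho(z)^{tp'}
\]
and
\[
\int_\calU K(z,w)\,\bfrho(z)^{tp+s}\,dV(z) \leq C\,\bfrho(w)^{tp+s};
\]
both are instances of the identity \eqref{eqn:keylem}, and a direct calculation with $h = \bfrho^t$ shows that they hold exactly when $t$ lies in the open intervals $\bigl(-(b+1)/p',\,a/p'\bigr)$ and $\bigl(-(a+s+1)/p,\,(b-s)/p\bigr)$, respectively. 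The endpoint cases $p=1$ and $p=\infty$ are handled directly by Fubini's theorem together with a single application of \eqref{eqn:keylem} in the appropriate variable.

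The crux, and really the only substantive step, is to check that these two intervals have a common point precisely when $-pa < s+1 < p(b+1)$. There are four cross inequalities between the endpoints: two of them reduce to $a + b > -1$, which follows from adding the two given hypotheses, while the remaining two, after clearing denominators via $1/p + 1/p' = 1$, rearrange back to $-pa < s+1$ and $s+1 < p(b+1)$ themselves. Hence an admissible $t$ exists, Schur's test applies, and the claimed boundedness follows. I anticipate no conceptual obstacle here beyond the algebraic bookkeeping — the hardest part is simply making sure the four endpoint comparisons are manipulated correctly and match the stated hypothesis with no slack.
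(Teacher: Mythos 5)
Your proof is correct and follows the same route as the paper: the key step in both is the reduction $|T_{a,b,\alpha'}f|\lesssim T_{a,b,0'}(|f|)$ via Lemma~\ref{lem:ptwsest}. The only difference is that the paper then simply cites \cite[Theorem 1]{LLHZ19} for the case $\alpha'=0'$, whereas you supply the standard Schur-test proof of that cited result; your interval computations and the verification that the two intervals overlap exactly under $-pa<s+1<p(b+1)$ all check out.
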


\begin{proof}
In the special case when $\alpha^{\prime}=0^{\prime}$,  this has been shown in \cite[Theorem 1]{LLHZ19}.
To prove the general case,  note that
\[
|T_{a,b,\alpha^{\prime}} f(z)| \lesssim T_{a,b,0^{\prime}} (|f|)(z)
\]
by Lemma \ref{lem:ptwsest}.
\end{proof}

\begin{proof}[{Proof of Proposition \ref{prop:bddnsofder}}]
It suffices to prove the first assertion.
Let $\lambda$ be sufficiently large such that $p(\lambda+1)>t+1$. Then by Lemma \ref{thm:DK}, we have
\[
f(z) = c_\lambda \int\limits_{\calU} \frac{\bfrho(w)^{\lambda}}{\bfrho(z,w)^{n+1+\lambda}} f(w)dV(w).
\]
For any $\alpha\in \mathbb{N}_{0}^n$, it is not hard to check that
\[
\bfL^{\alpha} \left\{\frac {1}{\bfrho(z,w)^{n+1+\lambda}} \right\}  = C(n,\lambda,\alpha)
\frac {\left(\overline{{z}^{\prime}}-\overline{{w}^{\prime}}\right)^{\alpha^{\prime}}}
{\bfrho(z,w)^{n+1+\lambda+|\alpha|}},
\]
where $C(n,\lambda,\alpha)$ is a constant depending on $n$, $\lambda$ and $\alpha$. Also, note that $|\alpha|=\langle \alpha \rangle+|\alpha^{\prime}|/2$.
Therefore,
\begin{align*}
\big|\bfrho^{\langle \alpha \rangle} \bfL^{\alpha} f(z)\big|
~\lesssim~& \bfrho(z)^{\langle \alpha \rangle}
\int\limits_{\calU} \frac { \big|(z^{\prime}-w^{\prime})^{\alpha^{\prime}}\big| \bfrho(w)^{\lambda}}
{|\bfrho(z,w)|^{n+1+\lambda+|\alpha|}} |f(w)| dV(w)\\
~=~& T_{\langle \alpha \rangle, \lambda, \alpha^{\prime}}(|f|)(z).
\end{align*}
Now, the proposition follows immediately from Lemma \ref{lem:IntOper}.
\end{proof}

\section{Proof of Theorem \ref{thm:main2}}


We divide the proof into two steps.

\textit{Step 1.} We first consider the special case $\alpha=(0^{\prime}, N)$ with $N\geq 1$.
This will be proved if we show that $\bfL_n^N f \equiv 0$ implies $\bfL_n^{N-1} f \equiv 0$ and then by induction.

Suppose $\bfL_n^N f \equiv 0$. Then $f$ has the form
\[
f(z)=f_{N-1}(z^{\prime}) z_n^{N-1}+f_{N-2}(z^{\prime}) z_n^{N-2}+\cdots+ f_0(z^{\prime})
\]
with $f_{N-1}, \cdots, f_0$ holomorphic functions of $z^{\prime}$.
It follows that
\[
\bfL_n^{N-1} f(z) = (N-1)! f_{N-1}(z^{\prime}).
\]
We are then reduced to show that $f_{N-1}(z^{\prime})\equiv 0$. In view of Proposition \ref{prop:bddnsofder}, we know that $\bfrho^{N-1} \bfL_n^{N-1} f\in L_t^p(\calU)$. Thus,
\begin{align*}
\infty &> \int\limits_{\calU}  \bfrho(z)^{(N-1)p+t} \left| f_{N-1}(z^{\prime})\right|^p dV(z)\\
&= \int\limits_{\bbC^{n-1}} \Bigg\{\int\limits_{\ImPt z_n > |z^{\prime}|^2}
\left(\ImPt z_n - |z^{\prime}|^2\right)^{(N-1)p+t} dm_2(z_n)\Bigg\} |f_{N-1}(z^{\prime})|^p dm_{2n-2}(z^{\prime}).
\end{align*}
However, observe that the inner integral is divergent for any fixed $z^{\prime}$. This forces that $f_{N-1}(z^{\prime})\equiv 0$.

\textit{Step 2.} We then consider the general case.
Suppose that $\bfL^{\alpha} f\equiv 0$. A simple calculation shows that
\[
\bfL^{\alpha} f  ~=~ \sum_{0^{\prime}\leq \gamma^{\prime}\leq \alpha^{\prime}} \binom{\alpha^{\prime}}{\gamma^{\prime}}  (2i\bar{z}^{\prime})^{\gamma^{\prime}} \partial^{\alpha-(\gamma^{\prime}, -|\gamma^{\prime}|)} f.
\]
View this as a polynomial in $\bar{z}^{\prime}$. So, the assumption $\bfL^{\alpha} f\equiv 0$ implies that
$\partial^{\alpha-(\gamma^{\prime}, -|\gamma^{\prime}|)} f \equiv 0$
for all $\gamma^{\prime}\in \mathbb{N}_{0}^{n-1}$ with $0^{\prime}\leq \gamma^{\prime}\leq \alpha^{\prime}$.
In particular, when $\gamma^{\prime} = \alpha^{\prime}$, we have $\bfL_n^{|\alpha|} f \equiv 0$.
In view of Step 1, this implies that $f\equiv 0$. The proof is complete.

\section{Integral representation}

We are now in the position to prove Theorem \ref{thm:main}. We first consider the case $p=1$ with $\lambda=t$.

\begin{lemma}\label{lem:Prhopar}
Theorem \ref{thm:main} holds for $p=1$ with $\lambda=t$.
\end{lemma}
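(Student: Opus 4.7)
The plan is to evaluate $\mathcal{P}_{t}(\bfrho^{N}\bfL_{n}^{N}f)$ explicitly, by expressing $\bfL_{n}^{N}f$ back in terms of $f$ via the reproducing formula and then reducing the resulting double integral through Fubini and the Key Lemma.

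First I apply Lemma \ref{thm:DK} (with $\lambda=t$) to write
\[
f(w)=c_{t}\int_{\calU}\frac{\bfrho(v)^{t}f(v)}{\bfrho(w,v)^{n+1+t}}\,dV(v),
\]
and differentiate $N$ times in $w_{n}$ under the integral sign. Since $\partial_{w_{n}}\bfrho(w,v)=-i/2$, a short induction gives
\[
\bfL_{n}^{N}\bigl[\bfrho(w,v)^{-(n+1+t)}\bigr]=\left(\tfrac{i}{2}\right)^{N}\frac{\Gamma(n+1+t+N)}{\Gamma(n+1+t)}\,\bfrho(w,v)^{-(n+1+t+N)},
\]
which yields a closed-form integral representation of $\bfL_{n}^{N}f$ against $f$. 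Substituting this into $\mathcal{P}_{t}(\bfrho^{N}\bfL_{n}^{N}f)(z)$ and applying Fubini, the inner integral takes the form
\[
\int_{\calU}\frac{\bfrho(w)^{t+N}}{\bfrho(z,w)^{n+1+t}\,\bfrho(w,v)^{n+1+t+N}}\,dV(w),
\]
which Lemma \ref{lem:keylem2} (with $r=n+1+t$, $s=n+1+t+N$, and the ``$t$'' of that lemma taken to be $t+N$, so that $r+s-(t+N)-n-1=n+1+t$) evaluates in closed form as an explicit constant multiple of $\bfrho(z,v)^{-(n+1+t)}$. What remains is precisely $f(z)/c_{t}$ by a second application of Lemma \ref{thm:DK}, and collecting all the constants $c_{t}$, $(i/2)^{N}$, and the Gamma factors produces the claimed identity.

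The main technical wrinkle lies in the fact that we are at the endpoint $p=1$, where H\"older's inequality is unavailable. Both the differentiation under the integral in the first step and the subsequent Fubini in the second are handled uniformly by the pointwise estimate \eqref{eqn:elemtryeq1}: from $|\bfrho(w,v)|\ge \bfrho(w)/2$ one bounds the singular kernel factors in terms of $\bfrho(w)$ alone, so that on any compact subset of $\calU$ the integrand is dominated by a constant multiple of $\bfrho(v)^{t}|f(v)|$, and absolute convergence reduces to the finiteness of $\|f\|_{1,t}$, which is given.
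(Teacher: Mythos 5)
Your overall strategy --- reproduce $f$, differentiate under the integral sign, interchange the order of integration, and collapse the inner integral with Lemma \ref{lem:keylem2} --- is exactly the paper's. The gap is in the justification of Fubini, and it is not cosmetic: you apply Lemma \ref{thm:DK} with the \emph{same} exponent $t$ that appears in $\mathcal{P}_{t}$, and at this endpoint none of the crude bounds coming from \eqref{eqn:elemtryeq1} make the absolute double integral finite. The quantity to be controlled is
\[
\int\limits_{\calU}\int\limits_{\calU}\frac{\bfrho(w)^{t+N}}{|\bfrho(z,w)|^{n+1+t}\,|\bfrho(w,v)|^{n+1+t+N}}\,dV(w)\,\bfrho(v)^{t}|f(v)|\,dV(v),
\]
where $w$ ranges over all of $\calU$, not a compact subset, so your ``on any compact subset'' domination is beside the point. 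Your proposed bound $|\bfrho(w,v)|\ge\bfrho(w)/2$ turns the $w$-integrand into a multiple of $\bfrho(w)^{-(n+1)}|\bfrho(z,w)|^{-(n+1+t)}$, whose integral over $\calU$ is $+\infty$ by \eqref{eqn:keylem} (the weight $-(n+1)$ violates the condition $t>-1$ there). The other two natural bounds fare no better: $|\bfrho(w,v)|\ge\bfrho(v)/2$ leaves $\int\bfrho(w)^{t+N}|\bfrho(z,w)|^{-(n+1+t)}dV(w)$, divergent since $(n+1+t)-(t+N)=n+1-N$ is not $>n+1$; and $2|\bfrho(z,w)|\ge\bfrho(z)$ leaves $\int\bfrho(w)^{t+N}|\bfrho(w,v)|^{-(n+1+t+N)}dV(w)$, which is exactly the borderline case $s-t=n+1$ of \eqref{eqn:keylem} and again $+\infty$. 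So as written the interchange is unjustified.

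The fix is the one device you omitted: apply Lemma \ref{thm:DK} in the first step with an auxiliary exponent $\gamma>t$ (legitimate, since $p=1$ only requires $\gamma\ge t$), so that $\bfL_n^N f$ is represented against the kernel $\bfrho(v)^{\gamma}\bfrho(w,v)^{-(n+1+\gamma+N)}$. Then the single bound $2|\bfrho(z,w)|\ge\bfrho(z)$ reduces the absolute double integral to $\int\bfrho(w)^{t+N}|\bfrho(w,v)|^{-(n+1+\gamma+N)}dV(w)$, which \eqref{eqn:keylem} evaluates as a constant times $\bfrho(v)^{t-\gamma}$ precisely because $\gamma>t$ \emph{strictly}; integrating this against $\bfrho(v)^{\gamma}|f(v)|$ gives $\|f\|_{1,t}<\infty$. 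The rest of your computation --- Lemma \ref{lem:keylem2} with $r=n+1+t$, $s=n+1+\gamma+N$ and weight $t+N$, followed by the final application of Lemma \ref{thm:DK} --- then goes through verbatim with $\gamma$ in place of $t$ in the appropriate slots.
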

\begin{proof}
Assume that $f\in A_{\lambda}^1(\calU)$. Let $\gamma>\lambda$, by Lemma \ref{thm:DK} we have
\[
f(z)=c_{\gamma} \int\limits_{\calU} \frac {\bfrho(w)^{\gamma}}{\bfrho(z,w)^{n+1+\gamma}} f(w) dV(w)
\]
for all $z\in\calU$. Then a simple calculation shows that
\[
\bfL_n^N f(z) = (n+1+\gamma)_{N} (-i/2)^N c_{\gamma}  \int\limits_{\calU} \frac {\bfrho(w)^{\gamma}}{\bfrho(z,w)^{n+1+\gamma+N}} f(w) dV(w),
\]
where $(n+1+\gamma)_{N}$ is the Pochhammer symbol denoted by $(a)_{k}=a(a+1)\cdots(a+k-1)$.
Thus,
\begin{align*}
\mathcal{P}_{\lambda}(\bfrho^N \bfL_n^N f)(z)
&= c_{\lambda} \int\limits_{\calU} \frac{\bfrho(w)^{\lambda+N} \bfL_n^N f(w)}{\bfrho(z,w)^{n+1+\lambda}} dV(w)\\
&=  c_{\lambda} (n+1+\gamma)_{N} (-i/2)^N c_{\gamma} \\
& \quad\times  \int\limits_{\calU} \frac{\bfrho(w)^{\lambda+N}}{\bfrho(z,w)^{n+1+\lambda}}
\bigg(\int\limits_{\calU} \frac {\bfrho(u)^{\gamma}}{\bfrho(w,u)^{n+1+\gamma+N}} f(u) dV(u) \bigg) dV(w).
\end{align*}
By Fubini's theorem (the justification shall be stated later) and \eqref{eqn:keylem2}, the above double integral equals to
\begin{align*}
\int\limits_{\calU} & \bfrho(u)^{\gamma} f(u) \bigg(
\int\limits_{\calU} \frac{\bfrho(w)^{\lambda+N}}{\bfrho(z,w)^{n+1+\lambda} \bfrho(w,u)^{n+1+\gamma+N}} dV(w)
\bigg) dV(u)\\
&=  C_1(n,n+1+\lambda,n+1+\gamma+N,\lambda+N)
\int\limits_{\calU} \frac{\bfrho(u)^{\gamma} f(u)}{\bfrho(z,u)^{n+1+\gamma}} dV(u)\\
&= C_1(n,n+1+\lambda,n+1+\gamma+N,\lambda+N) c_{\gamma}^{-1} f(z)
\end{align*}
for every $z\in\calU$. Consequently,
\[
\mathcal{P}_{\lambda}(\bfrho^N \bfL_n^N f)(z)=\frac{(-i/2)^N\Gamma(1+\lambda+N)}{\Gamma(1+\lambda)} f(z).
\]
It remains to justify the use of Fubini's theorem. For any fixed $z\in\calU$, by \eqref{eqn:elemtryeq1} and \eqref{eqn:keylem} (note that $\gamma>\lambda$), it follows that
\begin{align*}
&\int\limits_{\calU} \bigg( \int\limits_{\calU} \frac{\bfrho(w)^{\lambda+N}}{|\bfrho(z,w)|^{n+1+\lambda} |\bfrho(w,u)|^{n+1+\gamma+N}} dV(w) \bigg) \bfrho(u)^{\gamma} |f(u)| dV(u)\\
&\leq (\bfrho(z)/2)^{-n-1-\lambda} \int\limits_{\calU} \bigg( \int\limits_{\calU} \frac{\bfrho(w)^{\lambda+N}}{|\bfrho(w,u)|^{n+1+\gamma+N}} dV(w) \bigg) \bfrho(u)^{\gamma} |f(u)| dV(u)\\
&=  (\bfrho(z)/2)^{-n-1-\lambda} C_2(n,n+1+\gamma+N,\lambda+N)
\int\limits_{\calU} |f(u)| \bfrho(u)^{\lambda}dV(u) <\infty
\end{align*}
due to the assumption $f\in A_{\lambda}^1(\calU)$.
The proof of the lemma is complete.
\end{proof}

\begin{proof}[Proof of Theorem \ref{thm:main}]
By Lemma \ref{lem:Prhopar}, it remains to deal with the cases $1<p<\infty$ with $p(\lambda+1)>t+1$ and $p=1$ with $\lambda>t$.
Put
\[
g:=f - \frac{(2i)^N\Gamma(1+\lambda)}{\Gamma(1+\lambda+N)}\mathcal{P}_{\lambda} (\bfrho^N \bfL_n^N f).
\]
Note from Proposition \ref{prop:bddnsofder} that $\bfL_n^N f\in A_{t+Np}^p(\calU)$. Thus, by Lemma \ref{thm3.1:DK}, $\mathcal{P}_{\lambda} (\bfrho^N \bfL_n^N f)$ belongs to $A_t^p(\calU)$ and so does $g$.
A simple calculation shows that
\begin{align*}
\bfL_n^N g(z) &= \bfL_n^N f(z) - c_{\lambda+N}
\int\limits_{\calU} \frac {\bfrho(w)^{\lambda+N}} {\bfrho(z,w)^{n+1+\lambda+N}} \bfL_n^N f (w) dV(w)\\
&= \bfL_n^N f(z) - P_{\lambda+N}(\bfL_n^N f )(z)=0
\end{align*}
by Lemma \ref{thm:DK}, since $p(\lambda+N+1)>t+Np+1$ for any case.
Therefore,  it follows from Theorem \ref{thm:main2} that $g \equiv 0$, completing the proof.
\end{proof}

\section{Proof of Theorem \ref{thm:main4}}
It suffices to prove that there exists a positive constant $C$ such that
\begin{equation}\label{eq:Derivative norm}
\|f\|_{p,t} \leq C  \|\bfrho^N \bfL_n^N f\|_{p,t}.
\end{equation}
Since if the above inequality is proved, merged with Proposition \ref{prop:bddnsofder}, the theorem follows immediately.
Let $\lambda$ be sufficient large such that $p(\lambda+1)>t+1$.
Observe from Theorem \ref{thm:main} that
\[
f=  \frac{(2i)^N\Gamma(1+\lambda)}{\Gamma(1+\lambda+N)}\mathcal{P}_{\lambda} (\bfrho^N \bfL_n^N f).
\]
This together with Lemma \ref{thm3.1:DK} implies \eqref{eq:Derivative norm}, as desired.


\section*{Appendix: Proof of the fact $H^p(\calU) \subset A^{\frac{(n+1)p}{n}}(\calU)$}
\setcounter{section}{1}
\setcounter{equation}{0}
\setcounter{theorem}{0}

\renewcommand{\thesection}{A}

This fact is much less known than its counterpart in the setting of the unit ball.
Since we have been unable to find this fact in the literature, we give the details for completeness.

We begin by recalling some basic facts about the Hardy space $H^p(\calU)$ from \cite{Ste65}.

\begin{lemma}\label{lem:Stein65a}
If $0<p<\infty$ then there is a constant $A>0$ such that
\begin{equation*}
\int\limits_{b\calU} \sup_{\epsilon >0} |f(u+\epsilon \bfi)|^p d\bfbeta(u)\leq A \| f\|^p_{H^p(\calU)}
\end{equation*}
holds for all $f\in H^p(\calU)$.
\end{lemma}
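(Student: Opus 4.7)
The plan is to dominate the vertical maximal function $f^*(u):=\sup_{\epsilon>0}|f(u+\epsilon\bfi)|$ pointwise by a Heisenberg-group Hardy--Littlewood maximal function applied to $|f_*|^q$ for a suitable $q<p$, and then close by invoking the $L^{p/q}$-boundedness of that maximal operator. The three ingredients are: the boundary-value theory for $H^p(\calU)$, a Poisson--Szeg\H{o} majorization of the subharmonic function $|f|^q$, and the maximal theorem on the Heisenberg group.

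First I would identify $b\calU$ with the Heisenberg group $\bbH^{n-1}$ via $(u',t+i|u'|^2)\leftrightarrow (u',t)$, so that $d\bfbeta$ becomes Haar measure and the anisotropic dilations $(u',t)\mapsto(\delta u',\delta^2 t)$ induce a homogeneous-type structure whose non-isotropic balls $B_H(u,r)$ have volume $\approx r^{2n}$. A standard Vitali-type covering argument then gives strong type $(r,r)$ of the associated Hardy--Littlewood operator $M_H$ for every $r>1$. Next, I would recall the boundary-value theorem for $H^p(\calU)$: using the plurisubharmonicity of $|f|^p$ and the defining uniform bound on $\|f(\cdot+\epsilon\bfi)\|_p$, one shows $f(\cdot+\epsilon\bfi)\to f_*$ in $L^p(b\calU)$ and almost everywhere as $\epsilon\to 0^+$, with $\|f_*\|_p=\|f\|_{H^p(\calU)}$.

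The heart of the argument is a Poisson--Szeg\H{o} majorization for $|f|^q$, where I fix $q:=p/2\in(0,p)$. Since $|f|^q$ is plurisubharmonic and its restrictions to the level sets $\bfrho\equiv\epsilon$ are uniformly bounded in $L^{p/q}(b\calU)$ (note $p/q=2>1$), a limiting argument using subharmonic mean-value inequalities along shrinking ``tent'' domains in $\calU$, combined with weak compactness in $L^{p/q}$, produces
\[
|f(z)|^q \;\leq\; \int_{b\calU} K(z,v)\,|f_*(v)|^q\,d\bfbeta(v), \qquad z\in\calU,
\]
where $K$ is the Poisson--Szeg\H{o} kernel of $\calU$, built from the Cauchy--Szeg\H{o} kernel $S(z,w)\propto \bfrho(z,w)^{-n}$ and satisfying the non-isotropic decay
\[
K(u+\epsilon\bfi,v)\;\lesssim\;\frac{\epsilon^n}{(\epsilon+|u^{-1}v|_H^2)^{2n}}.
\]
This is a classical approximate identity on $(b\calU,d\bfbeta)$ controlled by a radially decreasing non-isotropic bump, so the standard approximate-identity comparison yields the pointwise bound $f^*(u)^q\lesssim M_H(|f_*|^q)(u)$.

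Finally, raising to the $p/q$-th power, integrating over $b\calU$, and invoking the strong type of $M_H$ on $L^{p/q}$ gives
\[
\int_{b\calU} f^*(u)^p\,d\bfbeta(u) \;\lesssim\; \bigl\|M_H(|f_*|^q)\bigr\|_{L^{p/q}}^{p/q} \;\lesssim\; \bigl\||f_*|^q\bigr\|_{L^{p/q}}^{p/q} \;=\; \|f_*\|_p^p \;=\; \|f\|_{H^p(\calU)}^p,
\]
as required. The principal obstacle I foresee is establishing the Poisson--Szeg\H{o} majorization $|f|^q\leq K\ast|f_*|^q$ rigorously: one must pass from the uniform $L^p$-control on level sets to a pointwise upper bound, which requires a careful limit of subharmonic mean-value inequalities together with weak compactness and the explicit form of the Cauchy--Szeg\H{o} kernel of $\calU$. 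The remaining steps---the kernel estimate, the approximate-identity comparison, and the maximal theorem on $\bbH^{n-1}$---are all classical once the Heisenberg structure on $b\calU$ is in place.
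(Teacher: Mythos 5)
The paper offers no proof of this lemma: it is quoted verbatim from Stein's 1965 note \cite{Ste65} (see the sentence ``We begin by recalling some basic facts\dots from \cite{Ste65}'' preceding it). Your outline is essentially the classical argument of Stein--Kor\'anyi for that result, so there is nothing to compare against internally; the question is only whether your sketch would close into a complete proof. The architecture is right: the homogeneous-type structure on $b\calU\cong\bbH^{n-1}$ with homogeneous dimension $2n$, the $L^r$-boundedness of $M_H$ for $r>1$, the choice $q=p/2$ so that $p/q=2$, the Poisson--Szeg\H{o} kernel $K(z,v)=|S(z,v)|^2/S(z,z)\approx \bfrho(z)^n/|\bfrho(z,v)|^{2n}$ with the stated nonisotropic decay and normalization (consistent with Lemma A.4 of the paper with $\theta=n$), and the standard approximate-identity domination by $M_H$ are all correct and routine.

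Two caveats. First, as you acknowledge, the entire difficulty of the lemma is concentrated in the majorization $|f(z)|^{q}\leq\int_{b\calU}K(z,v)\,g(v)\,d\bfbeta(v)$, which you assert but do not prove; a sketch that defers exactly the hard step has not really proved the statement. Second, there is a structural circularity in invoking ``the boundary-value theorem for $H^p(\calU)$'' (a.e.\ and $L^p$ convergence to $f_*$, i.e.\ Lemma A.2) as an ingredient: in the classical development that theorem is \emph{deduced from} the present maximal inequality, not available before it. This is easily repaired --- you never need $f_*$, only a weak limit: the functions $|f(\cdot+\epsilon\bfi)|^{p/2}$ are bounded in $L^2(b\calU)$ by definition of $H^p$, so a subsequence converges weakly to some $g\in L^2$ with $\|g\|_{L^2}^2\leq\|f\|_{H^p}^p$, and one proves the majorization with this $g$ (via the sub-mean-value property of the plurisubharmonic function $|f|^{p/2}$ for the invariant Laplacian on the approximating domains $\{\bfrho>\delta\}$, passing to the limit $\delta\to0^+$ using the weak convergence against the kernel $K(z,\cdot)\in L^2(b\calU)$). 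With that substitution the final chain of inequalities goes through verbatim, with $\|g\|_{L^2}^2$ in place of $\|f_*\|_p^p$.
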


\begin{lemma}\label{lem:Stein65b}
If $0<p<\infty$ and $f\in H^p(\calU)$, then there exists an $f^b\in L^p(b\calU)$ such that
\begin{enumerate}
  \item [(i)] $f(u+\epsilon \bfi) \to f^b (u)$ for almost every $u\in b\calU$, as $\epsilon \to 0^{+}$;
  \item [(ii)] $f(\,\cdot\, + \epsilon \bfi) \to f^b$ in the $L^p(b\calU)$ norm, as $\epsilon \to 0^{+}$;
  \item [(iii)] $\left\|f^b\right\|_{L^p(b\calU)} = \|f\|_{H^p(\calU)}$.
\end{enumerate}
The function $f^b$ is called the boundary function of $f$.
\end{lemma}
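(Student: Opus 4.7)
My plan is to derive all three claims from Lemma \ref{lem:Stein65a}, which furnishes the nontangential maximal function $f^{\ast}(u):=\sup_{\epsilon>0}|f(u+\epsilon\bfi)|$ as an $L^{p}(b\calU)$ majorant. The natural order is: first establish the a.e.\ vertical limit (claim (i)), then deduce $L^{p}$ convergence (claim (ii)) by dominated convergence, and finally extract the norm identity (claim (iii)) from a monotonicity property of $\epsilon\mapsto\|f(\,\cdot\,+\epsilon\bfi)\|_{L^{p}(b\calU)}$.

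For claim (i) I would slice along the last complex coordinate. Fix $u=(u',u_n)\in b\calU$; then the slice $F_u(\zeta):=f(u',u_n+i\zeta)$ is holomorphic on the right half-plane $\{\RePt\zeta>0\}$, since $\bfrho(u',u_n+i\zeta)=\RePt\zeta$. Applying Fubini to Lemma \ref{lem:Stein65a} together with the explicit form of $d\bfbeta$, the slice maximal function $\sup_{\epsilon>0}|F_u(\epsilon)|$ is finite almost surely in the $(u',\RePt u_n)$-variables and in fact enjoys the integrability needed to bring the classical Fatou theorem for the half-plane to bear on $F_u$. That theorem furnishes $\lim_{\epsilon\to 0^{+}}F_u(\epsilon)$ for almost every $u$, and this limit defines $f^{b}(u)$; the bound $|f^{b}|\leq f^{\ast}$ then places $f^{b}$ in $L^{p}(b\calU)$.

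Claim (ii) follows immediately: the pointwise a.e.\ convergence $|f(u+\epsilon\bfi)-f^{b}(u)|\to 0$ together with the dominator $|f(u+\epsilon\bfi)-f^{b}(u)|\leq 2f^{\ast}(u)$ lets us apply dominated convergence to $|\cdot|^{p}$, since $(2f^{\ast})^{p}\in L^{1}(b\calU)$. For (iii), claim (ii) already yields $\|f(\,\cdot\,+\epsilon\bfi)\|_{L^{p}(b\calU)}\to\|f^{b}\|_{L^{p}(b\calU)}$; to match this limit with $\|f\|_{H^{p}(\calU)}=\sup_{\epsilon>0}\|f(\,\cdot\,+\epsilon\bfi)\|_{L^{p}(b\calU)}$ it suffices to show that the $L^{p}$-norm on the horocycles $\{\bfrho=\epsilon\}$ is monotone nonincreasing in $\epsilon$. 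This is a standard consequence of the subharmonicity of $|f|^{p}$ (valid for every $p>0$ since $f$ is holomorphic) combined with a Poisson-integral comparison on the half-plane slices used in Step 1.

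The principal obstacle is claim (i): justifying the reduction to the classical half-plane Fatou theorem. Care is needed because Lemma \ref{lem:Stein65a} controls the full maximal function on $b\calU$, not obviously the one-dimensional slice maximal functions, so one must apply Fubini in the coordinates $(z',t)=(u',\RePt u_n)$ to redistribute this control to a.e.\ slices. A conceptually cleaner but technically heavier alternative is to invoke Korányi-admissible approach regions adapted to the Heisenberg structure of $b\calU$ together with a Hardy--Littlewood-type maximal theorem on $b\calU$; that route delivers the stronger \emph{nontangential} boundary limit, but the vertical statement in (i) is accessible via the slicing route alone.
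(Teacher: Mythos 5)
The paper does not actually prove this lemma: it is quoted as a known result with a citation to Stein's 1965 note \cite{Ste65}, so there is no in-paper argument to match yours against. Your proposal is a legitimate self-contained route, and it is essentially the classical one-variable reduction that underlies the quoted result. Two remarks on the details. First, in claim (i) the phrase ``the slice maximal function $\sup_{\epsilon>0}|F_u(\epsilon)|$ is finite almost surely \dots and in fact enjoys the integrability needed'' is the one place that must be said precisely: mere a.e.\ finiteness of the slice maximal function gives you nothing (a bounded function need not have a limit along a ray). What Fubini actually delivers from Lemma \ref{lem:Stein65a} is that for almost every $z'\in\bbC^{n-1}$ one has $\int_{\bbR}\sup_{\epsilon>0}|f(z',t+i|z'|^2+i\epsilon)|^p\,dt<\infty$, and since this integral dominates $\sup_{\epsilon>0}\int_{\bbR}|f(z',t+i|z'|^2+i\epsilon)|^p\,dt$, the slice function $w\mapsto f(z',w)$ lies in $H^p$ of the half-plane $\{\ImPt w>|z'|^2\}$; only then does the one-variable Fatou theorem (valid for all $0<p<\infty$) apply, and a second application of Fubini assembles the a.e.\ statement on $b\calU$. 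You flag exactly this issue at the end, so the gap is one of exposition rather than of substance. Second, your claims (ii) and (iii) are fine: dominated convergence with majorant a constant multiple of $(f^{\ast})^p\in L^1(b\calU)$ handles (ii) for every $p>0$, and for (iii) the needed inequality $\|f(\cdot+\epsilon\bfi)\|_{L^p(b\calU)}\le\|f^b\|_{L^p(b\calU)}$ follows slice-by-slice from the standard Poisson majorization $|F(x+iy)|^p\le(P_y\ast|F^b|^p)(x)$ for one-variable $H^p$ functions, integrated over $z'$; combined with the convergence from (ii) this identifies the supremum defining $\|f\|_{H^p(\calU)}$ with $\|f^b\|_{L^p(b\calU)}$. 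In short: the proposal is correct once the Fubini step is stated as an integrated (rather than pointwise) bound on the slice maximal functions, and it supplies a proof where the paper only supplies a reference.
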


The next lemma is also well known, see \cite[Proposition 2.6]{Kor65}.
\begin{lemma}\label{lem:Kor65}
If $1\leq p<\infty$ and $f\in H^p(\calU)$ then $f=P[f^b]$, where\[
P[f^b](z):=\frac{(n-1)!}{4\pi^n} \int\limits_{b\calU} \frac{\bfrho(z)^n}{|\bfrho(z,u)|^{2n}} f^b(u) d\bfbeta(u),\qquad z\in\calU,
\]
is the Poisson integral of $f^b$.
\end{lemma}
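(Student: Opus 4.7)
The plan is to derive $f=P[f^b]$ by an approximation argument that exploits the vertical translates $f_\epsilon(z):=f(z+\epsilon\bfi)$, reducing the problem to the case of functions holomorphic in a neighborhood of $\overline{\calU}$. Each $f_\epsilon$ is holomorphic in an open set containing $\overline{\calU}$ and decays as $|z|\to\infty$ (the decay coming from a pointwise bound $|f(z)|\lesssim \bfrho(z)^{-(n+1)/p}\|f\|_{H^p(\calU)}$ obtained by combining Lemma \ref{lem:Stein65a} with a sub-mean-value inequality on a small polydisk inside $\calU$), and by Lemma \ref{lem:Stein65b}(ii) its boundary restriction $f_\epsilon^b$ satisfies $f_\epsilon^b\to f^b$ in $L^p(b\calU)$ as $\epsilon\to 0^+$.

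Granting the reproducing identity $f_\epsilon = P[f_\epsilon^b]$ for each $\epsilon>0$, the passage to the limit is straightforward. For fixed $z\in\calU$, inequality \eqref{eqn:elemtryeq1} yields $\bfrho(z)^n/|\bfrho(z,u)|^{2n}\le 4^n\bfrho(z)^{-n}$ on $b\calU$, and a direct parametrization of $b\calU$ as $(u',s+i|u'|^2)$ shows the kernel is also integrable in $u$; hence it lies in $L^{p'}(b\calU,d\bfbeta)$ for every $1\le p'\le\infty$. Then Hölder's inequality gives $P[f_\epsilon^b](z)\to P[f^b](z)$, while $f_\epsilon(z)\to f(z)$ by continuity, completing the proof modulo the identity for the regular functions $f_\epsilon$.

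To establish $f_\epsilon=P[f_\epsilon^b]$ for these regular functions, the cleanest route is through the Cayley transform $\Phi:\bbB^n\to\calU$, a biholomorphism sending $\partial\bbB^n$ (minus a single point) to $b\calU$. Under $\Phi$, the kernel $\bfrho(z)^n/|\bfrho(z,u)|^{2n}$ pulls back to a constant multiple of the standard Poisson--Szeg\H{o} kernel $(1-|\zeta|^2)^n/|1-\langle\zeta,\eta\rangle|^{2n}$ on the unit ball, modulated by the Jacobian relating $d\bfbeta$ to the surface measure $d\sigma$ on $\partial\bbB^n$. Since $f_\epsilon\circ\Phi$ extends holomorphically to a neighborhood of $\overline{\bbB^n}$, the classical Poisson--Szeg\H{o} reproducing formula on the ball applies and transfers back to yield the identity for $f_\epsilon$.

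The hard part is the bookkeeping in this last step, in particular verifying the precise constant $(n-1)!/(4\pi^n)$ and tracking the Jacobian relating $d\bfbeta$ and $d\sigma$ under $\Phi$. An alternative, if the Jacobian calculation proves unpleasant, is to bypass the Cayley transform and exploit the invariance of both sides of the identity under Heisenberg translations and anisotropic dilations---these are holomorphic automorphisms of $\calU$ that preserve $\bfrho$ and transform $d\bfbeta$ by easily tracked factors. This reduces the identity to its verification at the single base point $z=\bfi$, where the integral over $b\calU$ can be evaluated by contour integration in $u_n$ followed by a Gaussian integral in $u'$.
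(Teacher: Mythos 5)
The paper does not actually prove this lemma: it is imported verbatim from Kor\'anyi \cite[Proposition 2.6]{Kor65}, so there is no in-paper argument to compare yours against. Your strategy---translate vertically to $f_\epsilon(z):=f(z+\epsilon\bfi)$, prove the reproducing identity for these regularized functions by transferring the Poisson--Szeg\H{o} formula from the ball via the Cayley transform, then pass to the limit using $f_\epsilon^b\to f^b$ in $L^p(b\calU)$ and the $L^{p'}$-integrability of the kernel---is essentially the classical route, and the limiting step is correctly justified. The bookkeeping you worry about is in fact clean: with $\bfrho(\Phi(\xi),\Phi(\zeta))=(1-\langle\xi,\zeta\rangle)/\bigl((1+\xi_n)\overline{(1+\zeta_n)}\bigr)$ and $d\bfbeta=\frac{4\pi^n}{(n-1)!}|1+\zeta_n|^{-2n}d\sigma$ (the latter is exactly the identity \cite[p.~575, 7.2(b)]{Ste93} already used in the proof of Lemma \ref{lem:relation}), the factors $|1+\xi_n|$ and $|1+\zeta_n|$ cancel and the kernel pulls back to precisely $(1-|\xi|^2)^n/|1-\langle\xi,\zeta\rangle|^{2n}\,d\sigma(\zeta)$ with constant $1$, confirming $(n-1)!/(4\pi^n)$.

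Two intermediate claims are wrong as stated, though both are repairable. First, the pointwise estimate for $H^p(\calU)$ is $|f(z)|\lesssim\bfrho(z)^{-n/p}\|f\|_{H^p(\calU)}$, not $\bfrho(z)^{-(n+1)/p}$ (the exponent $n/p$ is what the anisotropic sub-mean-value argument yields, since the relevant polydisk has volume $\approx\bfrho(z)^{n+1}$ while its shadow on $b\calU$ carries $\bfbeta$-measure $\approx\bfrho(z)^{n}$); note you cannot simply quote Lemma \ref{lem:pointestimate}, whose proof in the paper uses the very identity you are proving, so your independent sub-mean-value derivation is the right move. Second, $f_\epsilon$ does \emph{not} decay as $|z|\to\infty$: along the boundary one has $\bfrho(u+\epsilon\bfi)=\epsilon$ for all $u\in b\calU$, so the pointwise bound gives only $|f_\epsilon|\lesssim\epsilon^{-n/p}$, and consequently $f_\epsilon\circ\Phi$ need not extend holomorphically (or even continuously) across the point $-e_n\in\sphere$ corresponding to infinity. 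Neither decay nor such an extension is needed, however: boundedness of $f_\epsilon$ places $f_\epsilon\circ\Phi$ in $H^\infty(\ball)$, the Poisson--Szeg\H{o} formula on the ball reproduces $H^1(\ball)$ functions from their a.e.\ radial boundary values, and continuity of $f_\epsilon\circ\Phi$ on $\overline{\ball}\setminus\{-e_n\}$ identifies that boundary value with $f_\epsilon^b\circ\Phi$ off a $\sigma$-null set. With these two corrections your argument closes.
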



We shall need the following formula from \cite[Lemma 13]{Liu18}.
\begin{lemma}\label{lem:LiuCA18}
If $\theta>0$ then
\begin{equation*}\label{eqn:keylem1}
\int\limits_{b\calU} \frac {d\bfbeta(u)} {|\bfrho(z,u)|^{n+\theta}} ~=~
\frac {4\pi^{n} \Gamma(\theta)} {\Gamma^2\left(\frac {n+\theta}{2}\right)}\ \bfrho(z)^{-\theta}, \qquad z\in \calU.
\end{equation*}
\end{lemma}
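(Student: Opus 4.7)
The plan is to reduce to a distinguished base point using the Heisenberg-type symmetries introduced earlier in the paper, parametrize the boundary explicitly, and evaluate the resulting Euclidean integrals using classical one-dimensional Beta/Gamma identities.

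First, I would invoke the affine automorphism $h_z$ from Section~3, which satisfies $h_z(z) = \bfrho(z)\bfi$ and $\bfrho(h_z(a), h_z(b)) = \bfrho(a,b)$. On the boundary parametrization $u = (u', t + i|u'|^2)$ with $(u', t)\in \bbC^{n-1}\times\bbR$, a direct check shows that $h_z$ maps $b\calU$ onto itself (it preserves the defining relation $\ImPt u_n = |u'|^2$) and that its action in the coordinates $(u', t)$ is merely a translation in $u'$ composed with an affine shift in $t$; in particular its Jacobian is $1$ and it preserves $d\bfbeta$. Making the substitution $u = h_z^{-1}(v)$ and using $\bfrho(z, h_z^{-1}(v)) = \bfrho(\bfrho(z)\bfi, v)$ then reduces the identity to the case $z = s\bfi$ with $s = \bfrho(z) > 0$.

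Next, a direct computation gives
\[
\bfrho(s\bfi, u) = \tfrac{1}{2}\bigl(|u'|^2 + s + it\bigr), \qquad |\bfrho(s\bfi, u)|^2 = \tfrac{1}{4}\bigl((|u'|^2+s)^2 + t^2\bigr),
\]
so the integral becomes $2^{n+\theta}\int_{\bbC^{n-1}}\!\int_{\bbR}\bigl((|u'|^2+s)^2+t^2\bigr)^{-(n+\theta)/2}\,dt\,du'$. I would evaluate the $t$-integral via the classical formula $\int_{\bbR}(a^2+t^2)^{-\mu}\,dt = \sqrt{\pi}\,\Gamma(\mu-\tfrac12)/\Gamma(\mu)\cdot a^{1-2\mu}$ with $\mu=(n+\theta)/2$ and $a=|u'|^2+s$, and then pass to polar coordinates in $\bbC^{n-1}$; the substitution $v = |u'|^2/s$ converts the remaining radial integral into the Beta integral $B(n-1,\theta) = \Gamma(n-1)\Gamma(\theta)/\Gamma(n-1+\theta)$ and produces the expected factor $s^{-\theta}$.

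Collecting all constants yields
\[
\frac{2^{n+\theta+1}\,\pi^{\,n-1/2}\,\Gamma\bigl(\tfrac{n+\theta-1}{2}\bigr)\,\Gamma(\theta)}{\Gamma\bigl(\tfrac{n+\theta}{2}\bigr)\,\Gamma(n-1+\theta)}\,s^{-\theta}.
\]
The final step is Legendre's duplication formula $\Gamma(w)\Gamma(w+\tfrac12) = 2^{1-2w}\sqrt{\pi}\,\Gamma(2w)$, which with $w = (n+\theta-1)/2$ gives $\Gamma(\tfrac{n+\theta-1}{2})\Gamma(\tfrac{n+\theta}{2}) = 2^{2-n-\theta}\sqrt{\pi}\,\Gamma(n-1+\theta)$; substituting back cancels the $\Gamma(n-1+\theta)$ in the denominator and collapses the powers of $2$ and $\pi$ to the claimed constant $4\pi^n\Gamma(\theta)/\Gamma^2\bigl(\tfrac{n+\theta}{2}\bigr)$. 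I expect no essential obstacle here; the only step requiring a little care is the verification that $h_z$ preserves $d\bfbeta$, and the remainder is purely bookkeeping of Gamma factors.
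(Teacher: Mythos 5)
The paper does not actually prove this lemma: it is quoted verbatim from \cite[Lemma 13]{Liu18}, so your self-contained computation is a genuinely different (and welcome) route. Your strategy is sound: the reduction to $z=s\bfi$ via $h_z$ is legitimate, since in the coordinates $(u',t)$ the map acts as $(u',t)\mapsto\bigl(u'-z',\,t-\RePt z_n+2\ImPt(u'\cdot\overline{z'})\bigr)$, a unipotent affine map with Jacobian $1$, so $d\bfbeta$ is indeed preserved; the identity $\bfrho(z,h_z^{-1}(v))=\bfrho(\bfrho(z)\bfi,v)$ and the formula $\bfrho(s\bfi,u)=\tfrac12(|u'|^2+s+it)$ are both correct; and the $t$-integral, the polar-coordinate Beta integral (valid for $n\ge 2$; the case $n=1$ degenerates to the $t$-integral alone), and the duplication-formula finish are all the right steps.

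The one flaw is a bookkeeping error in your collected constant: the correct value is
\[
\frac{2^{\,n+\theta}\,\pi^{\,n-1/2}\,\Gamma\bigl(\tfrac{n+\theta-1}{2}\bigr)\,\Gamma(\theta)}{\Gamma\bigl(\tfrac{n+\theta}{2}\bigr)\,\Gamma(n-1+\theta)}\,s^{-\theta},
\]
not $2^{\,n+\theta+1}(\cdots)$. You appear to have dropped the factor $\tfrac12$ coming from $r\,dr=\tfrac{s}{2}\,dv$ in the substitution $v=|u'|^2/s$ (the surface area of the unit sphere in $\bbR^{2n-2}$ contributes $2\pi^{n-1}/\Gamma(n-1)$, and its $2$ cancels against that $\tfrac12$). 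With your stated constant, the duplication formula would produce $8\pi^n\Gamma(\theta)/\Gamma^2\bigl(\tfrac{n+\theta}{2}\bigr)$ rather than the claimed $4\pi^n\Gamma(\theta)/\Gamma^2\bigl(\tfrac{n+\theta}{2}\bigr)$; with the corrected exponent $2^{\,n+\theta}$ everything collapses exactly as you describe. So the argument is right, but fix that factor of $2$.
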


\begin{lemma}\label{lem:pointestimate}
If $1\leq p<\infty$ and $f\in H^p(\calU)$ then
\[
|f(z)|\lesssim \bfrho(z)^{-n/p} \|f\|_{H^p(\calU)}
\]
for all $z\in\calU$.
\end{lemma}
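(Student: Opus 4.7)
The plan is to combine the Poisson integral representation from Lemma~\ref{lem:Kor65} with Jensen's inequality, once we verify that the Poisson kernel is a probability density on $b\calU$ with respect to $d\bfbeta$.

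First, I would set $P(z,u) := \frac{(n-1)!}{4\pi^n} \frac{\bfrho(z)^n}{|\bfrho(z,u)|^{2n}}$ and compute $\int_{b\calU} P(z,u)\, d\bfbeta(u)$. Specializing Lemma~\ref{lem:LiuCA18} to $\theta = n$ (so $n+\theta = 2n$), the constant $\frac{4\pi^n \Gamma(n)}{\Gamma^2(n)} = \frac{4\pi^n}{(n-1)!}$ and the $\bfrho(z)^{-\theta} = \bfrho(z)^{-n}$ factor exactly cancel the prefactor and the $\bfrho(z)^n$, giving $\int_{b\calU} P(z,u)\, d\bfbeta(u) = 1$ for every $z \in \calU$. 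Hence $P(z,u)\, d\bfbeta(u)$ is a probability measure on $b\calU$.

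Next, since $f = P[f^b]$ by Lemma~\ref{lem:Kor65}, Jensen's inequality applied to the convex function $t \mapsto |t|^p$ (valid for $p \geq 1$) yields
\[
|f(z)|^p \;=\; \Bigl|\int_{b\calU} P(z,u) f^b(u)\, d\bfbeta(u)\Bigr|^p \;\leq\; \int_{b\calU} P(z,u) |f^b(u)|^p\, d\bfbeta(u).
\]
Then I would use the pointwise estimate $|\bfrho(z,u)| \geq \bfrho(z)/2$ from \eqref{eqn:elemtryeq1} (noting that this inequality holds for boundary points $u \in b\calU$ by the same calculation, since $\bfrho(u) = 0$ and $2|\bfrho(z,u)| \geq \bfrho(z)$ is the nontrivial half) to bound $P(z,u) \leq C\, \bfrho(z)^{-n}$ uniformly in $u$. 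Combining this with Lemma~\ref{lem:Stein65b}(iii), namely $\|f^b\|_{L^p(b\calU)} = \|f\|_{H^p(\calU)}$, we get
\[
|f(z)|^p \;\leq\; C\, \bfrho(z)^{-n} \int_{b\calU} |f^b(u)|^p\, d\bfbeta(u) \;=\; C\, \bfrho(z)^{-n} \|f\|_{H^p(\calU)}^p,
\]
and taking $p$-th roots produces the desired inequality.

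I do not anticipate any serious obstacle: the two main ingredients (the identity $\int P(z,\cdot)\,d\bfbeta = 1$ and the uniform bound on $P$) both reduce directly to already-established lemmas. The only subtlety to double-check is that the inequality $2|\bfrho(z,u)| \geq \bfrho(z)$ from \eqref{eqn:elemtryeq1} remains valid for $u \in b\calU$, which is immediate since the inequality was established for all $z, w \in \calU$ and extends by continuity to the boundary.
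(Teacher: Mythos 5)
Your proof is correct, and it takes a slightly different route from the paper's. Both arguments start from the Poisson representation $f=P[f^b]$ of Lemma~\ref{lem:Kor65} and both lean on Lemma~\ref{lem:LiuCA18}, but the paper splits into two cases: for $1<p<\infty$ it applies H\"older's inequality with exponent $q=p/(p-1)$ and then evaluates the $L^q(b\calU)$ norm of the Poisson kernel exactly via Lemma~\ref{lem:LiuCA18} with $\theta=2nq-n$, while for $p=1$ it uses only the crude bound $2|\bfrho(z,u)|\geq\bfrho(z)$. Your argument instead normalizes the kernel (Lemma~\ref{lem:LiuCA18} with $\theta=n$ showing $\int_{b\calU}P(z,u)\,d\bfbeta(u)=1$), applies Jensen's inequality to pull the $p$-th power inside, and then uses the same crude kernel bound; this handles $p=1$ and $p>1$ in one stroke and is arguably cleaner, at the cost of needing the extra normalization identity. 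One small point worth making explicit: since $f^b$ is complex-valued, the Jensen step should be read as $|\int P f^b\,d\bfbeta|\leq\int P|f^b|\,d\bfbeta$ followed by Jensen for the convex function $t\mapsto t^p$ applied to the nonnegative function $|f^b|$; this is routine. Your observation that $2|\bfrho(z,u)|\geq\bfrho(z)$ persists for $u\in b\calU$ is exactly the ``easy observation'' the paper itself invokes in its $p=1$ case, so there is no gap there.
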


\begin{proof}
We consider two separate cases.

\subsubsection*{Case 1: $1<p<\infty$}
For any fixed $z\in \calU$, by Lemma \ref{lem:Kor65} and H\"older's inequality, with $q:=p/(p-1)$,
\begin{align*}
|f(z)|&= \left|P[f^b](z)\right|
\lesssim \bigg\{\int\limits_{b\calU} \frac{\bfrho(z)^{nq}}{|\bfrho(z,u)|^{2nq}}d\bfbeta(u)\bigg\}^{1/q} \left\|f^b\right\|_{L^p(b\calU)}\\
&\lesssim \bfrho(z)^n \bfrho(z)^{-(2nq-n)/q}  \left\|f^b\right\|_{L^p(b\calU)} =\bfrho(z)^{-n/p} \|f\|_{H^p(\calU)},
\end{align*}
where in the second inequality we used Lemma \ref{lem:LiuCA18} and in the last equality we used Lemma \ref{lem:Stein65b} (iii).

\subsubsection*{Case 2: $p=1$}
By Lemma \ref{lem:Kor65}, together with the easy observation that $2|\bfrho(z,u)|\geq \bfrho(z)$ for all $u\in b\calU$, we have
\begin{align*}
|f(z)|=|P[f^b](z)| \lesssim~ & \bfrho(z)^{-n} \int\limits_{b\calU} \left|f^b(u)\right| d\bfbeta(u)\\
=~ & \bfrho(z)^{-n} \left\|f^b\right\|_{L^1(b\calU)} = \bfrho(z)^{-n} \|f\|_{H^1(\calU)},
\end{align*}
where, again, the last equality follows from Lemma \ref{lem:Stein65b} (iii).
\end{proof}

\begin{definition}
Let $N$ be a positive integer, and let $\mathfrak{A}_{N}$ be the family of functions $f$ in $H^{\infty}(\calU)$ satisfying
\begin{enumerate}
\item[(i)]
$f$ is continuous on $\overline{\calU}$;
\item[(ii)]
$ \sup \left\{|z_n+i|^N |f(z)|: z\in\overline{\calU} \right\} < +\infty$.
\end{enumerate}
\end{definition}

\begin{lemma}
If $1\leq p<\infty$, then the class $\mathfrak{A}_{N}$ is dense in $H^p(\calU)$.
\end{lemma}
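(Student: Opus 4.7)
The plan is to approximate a given $f\in H^p(\calU)$ by a two-parameter family $g_{\epsilon,c}$, obtained from $f$ by first translating vertically into the interior (to gain continuity up to $\overline{\calU}$) and then multiplying by a holomorphic cutoff in the $z_n$ variable (to enforce the polynomial decay at infinity). Concretely, for $\epsilon>0$ and $c\geq 1$, I would set
\[
g_{\epsilon,c}(z) \,:=\, f(z+\epsilon\bfi)\,\bigg(\frac{ic}{z_n+ic}\bigg)^{\!N}, \qquad z\in\overline{\calU}.
\]
The first factor smooths $f$ slightly beyond the boundary, while the second factor equals $1$ at $z_n=0$, tends to $0$ as $|z_n|\to\infty$, and (for each fixed $z_n$) approaches the constant $1$ as $c\to\infty$.

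The first step is to check that $g_{\epsilon,c}\in\mathfrak{A}_N$. The translate $z\mapsto f(z+\epsilon\bfi)$ is holomorphic on an open neighborhood of $\overline{\calU}$, since $\bfrho(z+\epsilon\bfi)\geq\epsilon$ whenever $z\in\overline{\calU}$; it is therefore continuous on $\overline{\calU}$ and, by Lemma \ref{lem:pointestimate}, bounded there by a constant times $\epsilon^{-n/p}\|f\|_{H^p(\calU)}$. The factor $(ic/(z_n+ic))^N$ is holomorphic on a neighborhood of $\overline{\calU}$ and bounded by $1$ in modulus, because $|z_n+ic|\geq c$ whenever $\ImPt z_n\geq 0$. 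For the decay condition, $c\geq 1$ together with $\ImPt z_n\geq 0$ implies $|z_n+i|\leq|z_n+ic|$, hence
\[
|z_n+i|^N\,|g_{\epsilon,c}(z)| \,\leq\, c^N\cdot \epsilon^{-n/p}\|f\|_{H^p(\calU)}
\]
uniformly on $\overline{\calU}$. Since the cutoff factor is bounded by $1$ in modulus on $\calU$ as well, we also have $\|g_{\epsilon,c}\|_{H^p(\calU)}\leq\|f\|_{H^p(\calU)}$, so $g_{\epsilon,c}\in\mathfrak{A}_N\cap H^p(\calU)$.

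The second step is to show that $\|g_{\epsilon,c}-f\|_{H^p(\calU)}$ can be made arbitrarily small. By Lemma \ref{lem:Stein65b}(iii), this reduces to $L^p(b\calU)$ convergence of the boundary values. Writing $h_c(w):=(ic/(w+ic))^N$, I would split
\[
g_{\epsilon,c}^b(u)-f^b(u) \,=\, \bigl[f(u+\epsilon\bfi)-f^b(u)\bigr]\,h_c(u_n) \,+\, f^b(u)\,\bigl[h_c(u_n)-1\bigr].
\]
The first term is controlled pointwise by $|f(u+\epsilon\bfi)-f^b(u)|$ (since $|h_c|\leq 1$), hence tends to $0$ in $L^p(b\calU)$ as $\epsilon\to 0^+$ by Lemma \ref{lem:Stein65b}(ii), uniformly in $c$. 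For the second term, $h_c(u_n)\to 1$ pointwise as $c\to+\infty$, while the integrand is dominated by $2|f^b(u)|\in L^p(b\calU)$, so dominated convergence forces it to $0$. Given $\eta>0$, I would first choose $c$ large enough to make the second term smaller than $\eta/2$ in $L^p(b\calU)$ norm, and then pick $\epsilon$ sufficiently small to handle the first term.

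I do not anticipate a serious obstacle. The conceptual point is to decouple the two requirements defining $\mathfrak{A}_N$ (continuity up to $\overline{\calU}$ on the one hand, polynomial decay at infinity on the other) into two independent factors, together with the observation that the cutoff $h_c$ is uniformly dominated by $1$ and approaches the constant $1$ as $c\to\infty$, so that it does not disturb the standard vertical approximation $f(\cdot+\epsilon\bfi)\to f^b$ in $L^p(b\calU)$.
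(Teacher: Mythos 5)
Your proof is correct and follows essentially the same route as the paper: both approximate $f$ by the vertical translate $f(\cdot+\epsilon\,\bfi)$ multiplied by a holomorphic cutoff that is a power of a M\"obius factor in $z_n$ bounded by $1$ and tending to $1$ (the paper's $g_k$, obtained by pulling back $((r_k+\xi_n)/(1+r_k\xi_n))^N$ through the Cayley transform, simplifies to exactly a factor of your form $(ic'/(z_n+ic''))^N$). The only notable difference is in verifying the convergence: the paper dominates $|f_k-f^b|$ by the vertical maximal function (Lemma \ref{lem:Stein65a}) and applies dominated convergence, whereas you decouple the two parameters, control the translation error via Lemma \ref{lem:Stein65b}(ii) uniformly in $c$, and handle the cutoff error by dominated convergence against $2|f^b|$ --- a slightly more elementary bookkeeping that avoids the maximal function estimate, but the same underlying construction.
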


\begin{proof}
Let $f\in H^p(\calU)$.
For $k=1,2,\ldots$, put $r_k:=1-1/k$ and
\[
G_k(\xi)= \left( \frac {r_k + \xi_n}{1 + r_k \xi_n} \right)^{N}.
\]
Note that 
$G_k$ are bounded by $1$, and with $N$ fixed they converge to $1$ uniformly on compact subsets
$\overline{\ball}\setminus \{-e_n\}$.
Set
\[
g_k(z):=G_k(r_k \Phi^{-1}(z)), \quad z\in \overline{\calU},
\]
where $\Phi^{-1}: \calU\to\ball$ is the inverse of Cayley transform given by
\[
\left(z^{\prime},z_{n}\right)\;\longmapsto\; \left(\frac {2iz^{\prime}}{i+z_{n}},
\frac {i-z_{n}}{i+z_{n}}\right).
\]
These functions satisfy
\begin{align}
& |g_k(z)|\leq 1 \text{ for all } z \in \overline{\calU};\\
& g_k(z) \to 1 \text{ as } k\to \infty, \text{ for every } z\in \overline{\calU}.
\end{align}


Now, we consider the sequence of functions
\[
f_k(z) := g_k(z) f\Big(z+\frac {1}{k}\, \bfi\Big), \qquad z\in \overline{\calU}, \; k=1,2,\ldots.
\]
In view of (A.2) and Lemma \ref{lem:Stein65b} (i), it is obvious that $f_k (u) \to f^b(u)$ for almost every $u\in b\calU$.
Also, it is immediate from (A.1) that
\[
\left|f_k(u)-f^b(u)\right| \leq  \sup_{\epsilon >0} |f(u+\epsilon \bfi)|+|f^b(u)|
\]
for all $u\in b\calU$. Thus, by the dominated convergence theorem, together with Lemmas A.1 and A.2, we find that
$\|f_k-f\|_{H^p(\calU)} = \|f_k - f^b\|_{L^p(b\calU)} \to 0$ as $k\to +\infty$.

It remains to show that $f_k$ belongs to $\mathfrak{A}_N$ for each $N$.
First, by Lemma \ref{lem:pointestimate}, we see that
\[
\left| f\Big(z+\frac {1}{k}\, \bfi\Big)\right| \lesssim \bfrho(z+\frac{1}{k}\bfi)^{-n/p}\|f\|_{H^p(\calU)}
\leq k^{n/p} \|f\|_{H^p(\calU)}.
\]
Then, a simple calculation shows that
\begin{align*}
|z_n+i|^N |f_k(z)|~=~&|z_n+i|^N \left|\frac{r_k [\Phi^{-1}(z)]_n+r_k}{1+r_k^2[\Phi^{-1}(z)]_n}\right|^{N}
\left| f\Big(z+\frac {1}{k}\, \bfi\Big)\right|\\
\lesssim~&  |z_n+i|^N \left|\frac{2i\, r_k }{(z_n+i)(1+r_k^2[\Phi^{-1}(z)]_n)}\right|^{N}
k^{n/p} \|f\|_{H^p(\calU)}\\
\leq~& \frac {2^N k^{n/p}}{(1-r_k^2)^{N}}  \|f\|_{H^p(\calU)} ~\leq~ 2^N k^{n/p+N} \|f\|_{H^p(\calU)},
\end{align*}
which completes the proof.
\end{proof}

%

\begin{lemma}\label{lem:relation}
Let $1\leq p<\infty$. Given a function $f$ on $\calU$, let
\begin{equation}
\tilde{f}(\xi) := c_{n,p} (1+\xi_n)^{-2n/p} f(\Phi(\xi)), \qquad \xi\in\ball,
\end{equation}
where $c_{n,p}:=\left(4\pi^n/(n-1)!\right)^{1/p}$ and $\Phi:\ball\to\calU$ is the Cayley transform given by
\[
(\xi^{\prime}, \xi_{n})\longmapsto\left( \frac {\xi^{\prime}}{1+\xi_{n}},
i\left(\frac {1-\xi_{n}}{1+\xi_{n}}\right) \right).
\]
Then $f\in H^p(\calU)$ if and only if $\tilde{f}\in H^p(\ball)$ and $\big\|\tilde{f}\big\|_{H^p(\ball)}=\|f\|_{H^p(\calU)}$.
\end{lemma}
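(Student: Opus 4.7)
The plan is to establish the bijection and isometry by first checking the holomorphic correspondence, then proving the norm equality on the dense subclass $\mathfrak{A}_N$ via a boundary change-of-variables, and finally extending to all of $H^p(\calU)$ by density.

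The equivalence of holomorphy is immediate: since $\mathrm{Re}(1+\xi_n) > 0$ on $\ball$, the branch $(1+\xi_n)^{-2n/p}$ is well defined and holomorphic on $\ball$, and $\Phi$ is biholomorphic, so $\tilde f$ is holomorphic on $\ball$ if and only if $f$ is holomorphic on $\calU$. The heart of the matter is the boundary Jacobian identity
\[
d\bfbeta(\Phi(\zeta)) = \frac{4\pi^n}{(n-1)!}\, |1+\zeta_n|^{-2n}\, d\sigma(\zeta), \qquad \zeta \in \sphere \setminus \{-e_n\},
\]
which I would derive by parametrizing $b\calU$ as $u = (u', \mathrm{Re}\,u_n + i|u'|^2)$ so that $d\bfbeta = du'\,d(\mathrm{Re}\,u_n)$, and then pulling back through $\Phi$ to isolate the conformal factor. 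Since $c_{n,p}^p = 4\pi^n/(n-1)!$, this immediately produces the boundary $L^p$ isometry
\[
\int_{\sphere} \big|c_{n,p}(1+\zeta_n)^{-2n/p}\,\psi(\Phi(\zeta))\big|^p d\sigma(\zeta) = \int_{b\calU} |\psi(u)|^p d\bfbeta(u)
\]
for every nonnegative measurable function $\psi$ on $b\calU$.

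Next I would transfer this identity to the Hardy norms, first on the dense subclass $\mathfrak{A}_N$ with $N$ sufficiently large (e.g.\ $N > n$). For $f \in \mathfrak{A}_N$, continuity on $\overline{\calU}$ together with the decay $|z_n+i|^N|f(z)| \in L^\infty$ lets one pass $\epsilon \to 0^+$ in the definition of $\|f\|_{H^p(\calU)}$ by dominated convergence, giving $\|f\|_{H^p(\calU)}^p = \int_{b\calU} |f|^p d\bfbeta$; the same reasoning in the ball picture (where the decay of $f$ absorbs the singularity of $(1+\xi_n)^{-2n/p}$ near $-e_n$) identifies $\|\tilde f\|_{H^p(\ball)}^p = \int_{\sphere} |\tilde f|^p d\sigma$. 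Applying the boundary isometry above with $\psi = f|_{b\calU}$ then yields the norm equality on $\mathfrak{A}_N$.

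To upgrade to all of $H^p(\calU)$, I would use the preceding density lemma to approximate $f$ by $\{f_k\} \subset \mathfrak{A}_N$ with $\|f_k - f\|_{H^p(\calU)} \to 0$. Linearity and the isometry already established on $\mathfrak{A}_N$ make $\{\tilde f_k\}$ Cauchy in $H^p(\ball)$; its limit is identified as $\tilde f$ via Lemma \ref{lem:pointestimate} (which yields uniform convergence on compact subsets of $\ball$ after pulling back through $\Phi$), and the norm identity passes to the limit. The converse direction is symmetric, using the classical density of functions continuous on $\overline{\ball}$ in $H^p(\ball)$. The main obstacle will be pinning down the Jacobian identity with the precise constant $4\pi^n/(n-1)!$: this is a careful determinant computation relating the Euclidean surface measure on $\sphere$ to the measure $d\bfbeta$ on $b\calU$, but once it is in hand the rest of the argument is a standard density extension.
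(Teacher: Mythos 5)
Your proposal is correct and follows essentially the same route as the paper: establish the isometry on the dense subclass $\mathfrak{A}_N$ via the boundary change-of-variables identity $d\bfbeta(\Phi(\zeta)) = \frac{4\pi^n}{(n-1)!}|1+\zeta_n|^{-2n}d\sigma(\zeta)$ (which the paper simply cites from Stein's book rather than rederiving), then extend by density, identifying the limit through the pointwise estimate of Lemma \ref{lem:pointestimate}. The only nitpick is that your parenthetical choice ``$N>n$'' is insufficient when $p<2$ (one needs $N\geq 2n/p$, e.g.\ $N>2n$ covers all $p\geq 1$), but this does not affect the argument since $\mathfrak{A}_N$ is dense for every $N$.
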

\begin{proof}
We only prove the ``only if'' part, the proof of the ``if'' part is similar.

Recall that for $0<p<\infty$ the Hardy space $H^p(\ball)$ over the unit ball $\ball$ of $\mathbb{C}^n$ consists of holomorphic
functions $F$ on $\ball$ such that
\[
\|F\|_{H^p(\ball)}:=\sup_{0<r<1} \left\{ \int\limits_{\sphere} |F(r\zeta)|^p d\sigma(\zeta)\right\}^{1/p} <\infty,
\]
where $d\sigma$ be the normalized surface measure on $\sphere$. Also, any function $F$ in $H^p(\ball)$ has radial boundary limit
$F^{\ast}(\zeta):=\lim_{r\to 1^{-}} F(r\zeta)$ for almost every $\zeta\in \sphere$. Moreover, we have
\begin{equation}\label{eqn:hardy-bdy-norm}
\|F\|_{H^p(\ball)}=\|F^{\ast}\|_{L^p(\sphere)}.
\end{equation}

We first assume that $f\in\mathfrak{A}_{N}$ for some $N>2n/p$. Writing $z=\Phi(\xi)$, we have
\begin{align*}
\big|\tilde{f}(\xi)\big| = c_{n,p}  |z_n+i|^{2n/p} |f(z)|
\leq c_{n,p} \sup_{z\in \calU} \left\{ |z_n+i|^N |f(z)|\right\}
\end{align*}
for all $\xi\in\ball$,  so $\tilde{f}\in H^{\infty}(\ball)\subset H^p(\ball)$. In particular, $\tilde{f}$ has radial boundary limit $\big(\tilde{f}\big)^{\ast}(\zeta)$ for almost every $\zeta\in \sphere$. On the other hand, since $f\in C(\overline{\calU})$ and the Cayley transform $\Phi$ is a continuous
bijection from $\overline{\ball} \setminus \{-e_n\}$ to $\overline{\calU}$, the function $\tilde{f}$ can be extended to be a continuous function
on $\overline{\ball} \setminus \{-e_n\}$.
It follows that
\[
\big(\tilde{f}\big)^{\ast}(\zeta)=c_{n,p} (1+\zeta_n)^{-2n/p} f^b(\Phi(\zeta))
\]
for almost every $\zeta\in \sphere\setminus \{-e_n\}$. Thus, by \cite[p. 575, 7.2(b)]{Ste93} we obtain
%
%
\begin{align*}
\int\limits_{\sphere} \big|\big(\tilde{f}\big)^{\ast}(\zeta)\big|^p d\sigma(\zeta)
~=~& (c_{n,p})^p \int\limits_{\sphere} \big|f^b(\Phi(\zeta))\big|^p \frac {d\sigma(\zeta)}{|1+\zeta_n|^{2n}}
= \int\limits_{b\calU} |f^b (u)|^p d\bfbeta(u).
\end{align*}
This implies that $\big\|\tilde{f}\big\|_{H^p(\ball)}=\|f\|_{H^p(\calU)}$, in view of \eqref{eqn:hardy-bdy-norm} and Lemma \ref{lem:Stein65b} (iii).

We have shown that the mapping $\mathcal{T}: f\mapsto \tilde{f}$ is an isometry from $\mathfrak{A}_N$ into $H^p(\ball)$. By
density, there is a unique bounded extension of $\mathcal{T}$ from $H^p(\calU)$ to $H^p(\ball)$.
Let us denote this extension by $\widetilde{\mathcal{T}}$. Then $\widetilde{\mathcal{T}}$ is also an isometry from $H^p(\calU)$ to $H^p(\ball)$, i.e.,
\[
\|\widetilde{\mathcal{T}}f\|_{H^p(\ball)}=\|f\|_{H^p(\calU)}
\]
for all $f\in H^p(\calU)$. It remains to show that $\widetilde{\mathcal{T}}f=\tilde{f}$ for all $f\in H^p(\calU)$.

Let $f\in H^p(\calU)$. Then there is a sequence $\{f_k\}\in\mathfrak{A}_{N}$ converges to $f$ in $H^p(\calU)$.
In view of Lemma \ref{lem:pointestimate}, $f_k \to f$ pointwise as well, and hence $\widetilde{f_k}\to \tilde{f}$ pointwise.
For every $0<r<1$, by Fatou's lemma, we have
\begin{align*}
\int\limits_{\mathbb{S}} \big|\tilde{f}(r\zeta)\big|^p \sigma(\zeta)
&\leq \liminf_{k\to\infty} \int\limits_{\mathbb{S}} \big|\widetilde{f_k}(r\zeta)\big|^p \sigma(\zeta)\\
&\leq \liminf_{k\to\infty} \big\|\widetilde{f_k} \big \|^p_{H^p(\ball)}
= \liminf_{k\to\infty} \|f_k\|^p_{H^p(\calU)} =\|f\|^p_{H^p(\calU)},
\end{align*}
which implies that $\tilde{f}\in H^p(\ball)$ and $\big\|\tilde{f}\big\|^p_{H^p(\ball)}\leq \|f\|^p_{H^p(\calU)}$.
By the uniqueness of the extension $\widetilde{\mathcal{T}}$, we conclude that $\widetilde{\mathcal{T}}f=\tilde{f}$ for all $f\in H^p(\calU)$.
The proof is complete.
%
\end{proof}

\begin{theorem}
If $1\leq p < \infty$ then $H^p(\calU) \subset A^{\frac{(n+1)p}{n}}(\calU)$.
\end{theorem}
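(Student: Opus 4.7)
My plan is to reduce the inclusion to the analogous classical embedding on the unit ball of $\mathbb{C}^n$ via the Cayley--transform isometry of Lemma A.7, and then transfer back by a direct change of variables.

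Given $f\in H^p(\calU)$, Lemma A.7 furnishes the companion function $\tilde f(\xi)=c_{n,p}(1+\xi_n)^{-2n/p}f(\Phi(\xi))$ lying in $H^p(\ball)$ with $\|\tilde f\|_{H^p(\ball)}=\|f\|_{H^p(\calU)}$. I would then invoke the classical Hardy--Littlewood embedding on the ball, namely $H^p(\ball)\subset A^{(n+1)p/n}(\ball)$. For the disk $(n=1)$ this is the original Hardy--Littlewood theorem $H^p(\disk)\subset A^{2p}(\disk)$; for general $n\geq 2$ the statement is standard and can be found in Rudin's \emph{Function Theory in the Unit Ball of $\mathbb{C}^n$} or Zhu's monograph, usually proved via the admissible non-tangential maximal function on $\sphere$ together with a slicing argument, or equivalently via the area-function characterization of $H^p(\ball)$. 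Applied to $\tilde f$, this produces $\tilde f\in A^{(n+1)p/n}(\ball)$.

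To transfer the conclusion back to $\calU$, I would compute the complex Jacobian of $\Phi$ directly: the Jacobian matrix is block upper-triangular, with diagonal block $(1+\xi_n)^{-1} I_{n-1}$ and lower-right entry $-2i(1+\xi_n)^{-2}$, yielding $J_{\mathbb C}\Phi(\xi)=-2i(1+\xi_n)^{-(n+1)}$ and hence $|J_{\mathbb C}\Phi(\xi)|^2=4|1+\xi_n|^{-2(n+1)}$. Setting $q=(n+1)p/n$, the exponent $2nq/p$ is exactly $2(n+1)$, so when one substitutes $f(\Phi(\xi))=c_{n,p}^{-1}(1+\xi_n)^{2n/p}\tilde f(\xi)$ into $|f(\Phi(\xi))|^q$, the factor $|1+\xi_n|^{2nq/p}$ cancels the Jacobian weight precisely. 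The change of variables $z=\Phi(\xi)$ then gives
\[
\int\limits_{\calU}|f(z)|^q\,dV(z)=4c_{n,p}^{-q}\int\limits_{\ball}|\tilde f(\xi)|^q\,dV(\xi)<\infty,
\]
which is the desired inclusion. From this viewpoint, the precise cancellation at $q=(n+1)p/n$ is exactly why $(n+1)p/n$ is the correct target exponent.

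The main obstacle is the ball-version inclusion \emph{at the critical exponent}. The pointwise bound $|F(\xi)|\lesssim(1-|\xi|^2)^{-n/p}\|F\|_{H^p(\ball)}$ (the ball analog of Lemma A.5), combined with $\int_\sphere|F(r\zeta)|^p d\sigma\leq\|F\|_{H^p(\ball)}^p$ and radial slicing, only yields the open inclusion $H^p(\ball)\subset A^\gamma(\ball)$ for $\gamma<(n+1)p/n$, because the endpoint integral $\int_0^1(1-r^2)^{-1}r^{2n-1}dr$ diverges logarithmically. Reaching the critical case genuinely requires a sharper tool (e.g., the area integral or admissible maximal function), which is why I would cite the ball-version theorem rather than re-derive it inside this appendix; the Cayley transform guarantees that the ball and Siegel inclusions are equivalent, so nothing is lost in this reduction.
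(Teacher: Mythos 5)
Your proposal is correct and follows essentially the same route as the paper: transfer $f$ to $\tilde f\in H^p(\ball)$ via the Cayley-transform isometry of Lemma A.7, cite the classical embedding $H^p(\ball)\subset A^{(n+1)p/n}(\ball)$ (the paper uses Zhu, Theorem 4.48), and pull the integrability back to $\calU$ by the change of variables $z=\Phi(\xi)$, where the Jacobian weight $|1+\xi_n|^{-2(n+1)}$ cancels exactly against $|1+\xi_n|^{2nq/p}$ at the critical exponent $q=(n+1)p/n$. The only differences are expository (your explicit Jacobian computation and the remark on why the endpoint needs the full ball theorem), not mathematical.
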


\begin{remark}
It is quite natural to ask whether this result could be extended to the case $0<p<1$.  However, we have been unable to show
this.
\end{remark}


\begin{proof}
Let $f\in H^p(\calU)$ and $\tilde{f}$ be as in Lemma \ref{lem:relation}. Then $\tilde{f} \in H^p(\ball)$.
Recalling that $H^p(\ball)\subset A^{\frac{(n+1)p}{n}}(\ball)$ (see for instance \cite[Theorem 4.48]{Zhu05}),
we find that $\tilde{f}\in A^{\frac{(n+1)p}{n}}(\ball)$, and hence
\begin{equation}\label{eqn:Lpintegrablility}
\int\limits_{\ball} |f(\Phi(\xi))|^{\frac{(n+1)p}{n}} \frac {dV(\xi)}{|1+\xi_n|^{2(n+1)}}
~=~ (c_{n,p})^{-\frac{(n+1)p}{n}}  \big\|\tilde{f}\big\|_{L^{\frac{(n+1)p}{n}}(\ball)}^{\frac{(n+1)p}{n}}  ~<~ +\infty.
\end{equation}
On the other hand, after the change of variables $z=\Phi(\xi)$ in the integral, we find that
\begin{align*}
\int\limits_{\ball} |f(\Phi(\xi))|^{\frac{(n+1)p}{n}} \frac {dV(\xi)}{|1+\xi_n|^{2(n+1)}}  ~=~ \frac {1}{4} \int\limits_{\calU} |f(z)|^{\frac{(n+1)p}{n}} dV(z),
\end{align*}
which, together with \eqref{eqn:Lpintegrablility}, implies that $f\in A^{\frac{(n+1)p}{n}}(\calU)$ as desired.
\end{proof}



\begin{thebibliography}{99}

%

\bibitem{Bek83}
D. B\'ekoll\'e,
\textit{The dual of the Bergman space $A^1$ on the Cayley transform of the unit ball in  $\mathbb{C}^n$},
C. R. Acad. Sci. Paris S\'er. I Math. \textbf{296} (1983), 377-380.



%
%

\bibitem{DK93}
M. M. Djrbashian and A. H. Karapetyan, \textit{Integral representations for some classes of functions holomorphic in a Siegel domain}, J. Math. Anal. Appl. \textbf{179} (1993), 91--109.


\bibitem{Gin64}
S. G. Gindikin, \textit{Analysis in homogeneous domains}, Russian Math. Surveys \textbf{19} (1964), 1--89.

%


\bibitem{Kor65}
A. Kor\'{a}nyi, \textit{The Poisson integral for generalized half-planes and bounded symmetric domains}, Ann. of Math. (2), \textbf{82} (1965), 332--350.
%
%
%
%
%
%
%

%
%

\bibitem{Liu18}
C. Liu, \textit{Norm estimates for the Bergman and Cauchy-Szeg\"{o} projections over the Siegel upper half-space}, Constr. Approx. \textbf{48} (2018), 385-413.

\bibitem{LLHZ19}
C. Liu, Y. Liu, P. Hu and L. Zhou, \textit{Two classes of interal operators over the Siegel upper half-space},
Complex Anal. Oper. Theory \textbf{13} (2019), 685--701.



\bibitem{LS20}
C. Liu and J. Si, \textit{Positive Toeplitz operators on Bergman spaces of the Siegel upper half-space},
Commun. Math. Stat. \textbf{8} (2020), 113-134.

%


%
%
%
%
%

\bibitem{RY96}
W. Ramey and H. Yi, \textit{Harmonic Bergman functions on half-spaces}, Trans. Amer. Math. Soc.
\textbf{348} (1996), 633--660.



\bibitem{Sem84}
S. Semmes, \textit{Trace ideal criteria for Hankel operators, and applications to Besov spaces}, Integr. Equ. Oper. Theory \textbf{7} (1984), 241--281.


\bibitem{Ste65}
E. Stein, \textit{Note on the boundary values of holomorphic functions}, Ann. of Math. (2) \textbf{82} (1965), 351--353.

\bibitem{Ste93}
E. Stein, \textit{Harmonic analysis: real-variable methods, orthogonality, and oscillatory integrals}.
Princeton University Press, Princeton, NJ, 1993.


%




\bibitem{Zhu05}
K. Zhu, \textit{Spaces of Holomorphic Functions in the Unit Ball}, Graduate Texts in Math, vol. 226, Springer, New York 2005.



\end{thebibliography}
\end{document}